\documentclass[12pt]{article}

\usepackage{amsmath, epsfig, cite, lineno}
\usepackage{amssymb,amsthm}
\usepackage{amsfonts, color}
\usepackage{latexsym}
\usepackage{enumitem}

\newtheorem{thm}{Theorem}[section]

\newtheorem{cor}[thm]{Corollary}

\newtheorem{lem}[thm]{Lemma}

\def\N{{\mathbb N}}
\def\Z{{\mathbb Z}}

\numberwithin{equation}{section}

\begin{document}

\begin{center}
{\Large\bf A further $q$-generalization of the (C.2) and (G.2)\\[6pt] 
supercongruences of Van Hamme}
\end{center}

\vskip 2mm \centerline{Song-Xiao Li and Su-Dan Wang\thanks{Corresponding author}}
\begin{center}

{College of Mathematics Science, Inner Mongolia Normal University, Huhhot 010022, Inner Mongolia,\\
 People's Republic of China\\
 {\tt songxiao\_li@mails.imnu.edu.cn},\hspace{0.5em}{\tt sdwang@imnu.edu.cn}}

\end{center}

\vskip 0.7cm \noindent{\bf Abstract.} Applying the $q$-Zeilberger algorithm, we establish a unified $q$-analogue of the (C.2) and (G.2) supercongruences 
of Van Hamme, which can be viewed as a refinement of several previously known results. As consequences, we  obtain a $q$-analogue of supercongruence 
involving Bernoulli numbers, as well as a refinement of (G.2) supercongruence.
\vskip 3mm \noindent {\it Keywords}: $q$-Zeilberger algorithm; $q$-analogue; $q$-supercongruences; Bernoulli numbers.

\vskip 0.2cm \noindent{\it AMS Subject Classifications:} 33D15, 11B65, 05A10

\section{Introduction}
In 1997, Van Hamme\cite{Van} listed some interesting $p$-adic analogues of Ramanujan's and Ramanujan-type formulas, two of which are the supercongruences 
labeled {\rm (C.2)} and {\rm (G.2)} given below. 
\begin{align}
    \label{eq:Van-C2}\sum_{k=0}^{(p-1)/2}(4k+1)\frac{(\tfrac14)^4_k}{k!^4}&\equiv p\pmod{p^3},\\
    \label{eq:Van-G2}\sum_{k=0}^{(p-1)/4}(8k+1)\frac{(\tfrac12)^4_k}{k!^4}&\equiv p\frac{\Gamma_p(\tfrac12)\Gamma_p(\tfrac14)}{\Gamma_p(\tfrac34)}\pmod{p^3},
\end{align}
where $(a)_n$ is the Pochhammer symbol and the $p$-adic Gamma function $\Gamma_p$ is defined as
$\Gamma_p(0)=1$ and \[\Gamma_p(n)=(-1)^n\prod_{\substack{1\leqslant k < n\\ p\nmid k}}^{}k.\] Here and throughout this paper, $p$ is an odd prime.
Van Hamme\cite{Van} himself proved \eqref{eq:Van-C2} and another two supercongruences of his list.  
Long\cite{Long} further proved that \eqref{eq:Van-C2} holds modulo $p^4$ for $p>3$. Guo and the second author\cite{Guo-Wang-C2} gave a $q$-analogue of Long's result as follows:
for any positive odd integer $n$, modulo ${[n]\Phi_n(q)^3}$,
\begin{align}\label{eq:GW-C2}
    \sum_{k=0}^{(n-1)/2}[4k+1]\frac{(q;q^2)^4_k}{(q^2;q^2)^4_k}\equiv q^{(1-n)/2}[n]+\frac{(n^2-1)(1-q^2)}{24}q^{(1-n)/2}[n]^3 .
\end{align}
Here it is proper to familiarize ourselves with the standard $q$-hypergeometric notation: $[n]=(1-q^n)/(1-q)$ is the {\em $q$-integer}. The {\em $q$-shifted factorials} are defined by $(a;q)_0=1$
and $(a;q)_n=(1-a)(1-aq)\cdots(1-aq^{n-1})$ for $n=1,2,\dots$,with the compact notation $(a_1,\dots,a_m;q)_n=(a_1;q)_n\cdots(a_m;q)_n$ for products of some $q$-shifted factorials.
 Moreover, let $\Phi_n(q)$ be the $n$-th \emph{cyclotomic polynomial} in $q$, i.e.,$$\Phi_n(q)=\prod_{\substack{1\leqslant k \leqslant n\\ \gcd(k,n)=1}}^{}(q-\zeta^k),$$
where $\zeta$ is an $n$-th primitive root of unity. \par 
It follows easily from \eqref{eq:GW-C2} that, for any prime $p\geqslant5$ and integer $r\geqslant1$,
\begin{align}\label{eq:GW-gen}
    \sum_{k=0}^{(p^r-1)/2}(4k+1)\frac{(\tfrac14)^4_k}{k!^4}\equiv p^r \pmod{p^{r+3}},
\end{align} 
which was originally observed by Long\cite{Long}. By employing an approach similar to Zudilin\cite{Zu}, Wang and Hu\cite{Wang-Hu} proved the following generalization of \eqref{eq:GW-gen}:
\begin{align}\label{eq:WH}
    \sum_{k=0}^{(p^r-1)/2}(4k+1)\frac{(\frac{1}{2})_k^4}{k!^4}\equiv p^r+\frac{7}{6}p^{r+3}B_{p-3}\pmod{p^{r+4}},
\end{align}
where $B_{p-3}$ is the Bernoulli numbers of index $p-3$, thereby verifying a conjecture previously proposed by Guo\cite[Conjecture~6.2]{Guo-fourth}.\par
He\cite{He} proved that \eqref{eq:Van-G2} is true modulo the higher power $p^4$. An identical conclusion was also obtained by Swisher\cite{Swisher}, who further proved the following generalization of (1.2):
for $p\equiv 3\pmod4$ and $p\geqslant5$,
\begin{align}\label{eq:Swisher}
    \sum_{k=0}^{(3p-1)/4}(8k+1)\frac{(\tfrac14)^4_k}{(1)^4_k}\equiv \frac{-3p^2\Gamma_p(\tfrac12)\Gamma_p(\tfrac14)}{2\Gamma_p(\tfrac34)}\pmod{p^4}.
\end{align}
Liu and Wang\cite{LW-G2-p3} established a $q$-analogue of \eqref{eq:Van-G2}. Shortly afterwards, 
they\cite{Liu-Wang-G2} further proved the following $q$-supercongruence as a $q$-analogue of He's result : for a positive integer $n\equiv1 \pmod4$, modulo ${[n]\Phi_n(q)^3}$,
\begin{align}\label{eq:LW-G2}
    \sum_{k=0}^{M}[8k+1]\frac{(q;q^4)^4_k}{(q^4;q^4)^4_k}q^{2k}\equiv &\frac{(q^2;q^4)_{(n-1)/4}}{(q^4;q^4)_{(n-1)/4}}[n]q^{(1-n)/4}\nonumber\\
    &\times\left\{1+[n]^2\frac{(n^2-1)(1-q)^2}{24}
+ [n]^2\sum_{k=1}^{(n-1)/4}\frac{q^{4k-2}}{[4k-2]^2}\right\},
\end{align}
where $M=(n-1)/4$ or $n-1$.

Recently, applying Jackson's ${}_6\phi_5$ summation,
Guo~\cite{Guo-jack} obtained a unified form of the
$q$-supercongruences \eqref{eq:GW-C2} and \eqref{eq:LW-G2}.
More precisely, let $d,n,r$ be integers satisfying
$d \geqslant 2$ and $0 \leqslant n-r \leqslant dn-d$,
such that $d$ and $r$ are coprime and $n \equiv r \pmod{d}$.
Then, modulo $[n]\Phi_n(q)^3$,

\begin{align}\label{eq:Guo-uni}
    \sum_{k=0}^{M}&[2dk+r]\frac{(q^r;q^d)_k^4}{(q^d;q^d)^4_k}q^{(d-2r)k}
    \equiv [n]q^{r(r-n)/d}\frac{(q^{2r};q^d)_{(n-r)/d}}{(q^d;q^d)_{(n-r)/d}}\left(1-[n]^2\sum_{j=1}^{(n-r)/d}\frac{q^{dj}}{[dj]^2}\right),
\end{align}
where $M=(n-r)/d$ or $n-1$.\par
The first aim of this paper is to establish the following generalization of \eqref{eq:Guo-uni}.
\begin{thm}\label{thm:main-1}
Let $n$ and $d$ be positive integers satisfying $\gcd(n,d)=1$. Let $r$ be an integer with $n+d-nd\leqslant r \leqslant n$ and 
    $n\equiv r \pmod{d} $. Then, modulo $[n]\Phi_n(q)^4$,
    \begin{align*}
        &\sum_{k=0}^{M}[2dk+r]\frac{(q^r;q^d)_k^4}{(q^d;q^d)_k^4}q^{(d-2r)k}\nonumber\\
        &\hspace{2em}\equiv q^{r(r-n)/d}\frac{(q^{2r};q^d)_{(n-r)/d}}{(q^{d};q^d)_{(n-r)/d}}\Bigg([n]-([n]^3+[n]^4(1-q))\sum_{k=1}^{(n-r)/d}\frac{q^{kd}}{[kd]^2}\Bigg)\nonumber\\
        &\hspace{3em}+[n]^4\sum_{k=1}^{(n-r)/d}q^{(kd-d-2n+2r)(d-n)/d}\frac{(q^d;q^d)_{k-1}(q^{2r};q^d)_{k-1}}{[dk-d+r]^3(q^r;q^d)_{k-1}^2},
    \end{align*}
    where $M=(n-r)/d$ or $n-1$.
\end{thm}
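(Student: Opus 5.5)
The plan is to use the $q$-Zeilberger algorithm, as the abstract says. We find a telescoping relation for the summand, and its boundary terms produce the right-hand side exactly; the congruence step comes only at the end. Write
\[
F(N,k)=[2dk+r]\frac{(q^r;q^d)_k^2\,(q^{r-dN},q^{r+dN};q^d)_k}{(q^d;q^d)_k^2\,(q^{d+dN},q^{d-dN};q^d)_k}\,q^{(d-2r)k}\cdot(\text{suitable power of } q),
\]
a one-parameter deformation of the summand. Specializing $q^{dN}\to q^{n-r}$, so that the parameter $a=q^{n}$ enters through $(aq^{r-n}\cdots)$, turns two of the four factors $(q^r;q^d)_k$ into $(q^{r-n},q^{r+n};q^d)_k/(q^{d-n},q^{d+n};q^d)_k$-type ratios. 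These agree with $(q^r;q^d)_k^2/(q^d;q^d)_k^2$ modulo $\Phi_n(q)^2$ (in fact modulo $(1-q^n)^2$ up to a correction).

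First, the $q$-Zeilberger algorithm should produce a certificate $G(N,k)$, a rational multiple of $F(N,k)$, with $F(N,k)-F(N-1,k)=G(N,k+1)-G(N,k)$. Summing over $k$ and then over $N$ gives a closed form for $\sum_k F(N,k)$ as a main product term plus a finite sum $\sum_{j=1}^{N}(\cdots)$. The main product term is the Jackson ${}_6\phi_5$ value $q^{r(r-n)/d}(q^{2r};q^d)_{(n-r)/d}/(q^d;q^d)_{(n-r)/d}$. The finite sum consists of the telescoped boundary terms $G(j,\cdot)$, and I expect it to be exactly the term
\[
\sum_{k=1}^{(n-r)/d}q^{(kd-d-2n+2r)(d-n)/d}\frac{(q^d;q^d)_{k-1}(q^{2r};q^d)_{k-1}}{[dk-d+r]^3(q^r;q^d)_{k-1}^2}.
\]

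Second, we compare the deformed sum with the original sum modulo $[n]\Phi_n(q)^4$. I would make the deformation precise by replacing the standard pair $(q^{r-n},q^{r+n})$ with the pair coming from Guo's creative microscoping in \eqref{eq:Guo-uni}, namely $a=q^n$, $b$ generic. I would then expand
\[
(aq^{r};q^d)_k(q^{r}/a;q^d)_k\big/\bigl((aq^d;q^d)_k(q^d/a;q^d)_k\bigr)
\]
to second order in $(1-a)$, or equivalently in $[n]$. The zeroth-order term reproduces \eqref{eq:Guo-uni}. The new $[n]^4$ and $[n]^4(1-q)$ corrections come from the next order of the expansion of $[n]$-powers, using
\[
q^{kd}/[kd]^2\cdot(1-q^n)=[n](1-q)\,q^{kd}/[kd]^2 .
\]
This explains the factor $[n]^3+[n]^4(1-q)$. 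For the truncation $M=n-1$, we note that for $(n-r)/d<k\le n-1$ the factor $(q^r;q^d)_k^4$ contains $\Phi_n(q)^4$ while the denominator contains at most one factor $\Phi_n(q)$. Together with the factor $[2dk+r]$, or pairing terms $k\leftrightarrow$ reflected $k$ to cancel, this makes the extra terms vanish modulo $[n]\Phi_n(q)^4$. The coprimality $\gcd(n,d)=1$ and the range $n+d-nd\le r\le n$ guarantee that exactly one factor $1-q^{r+dj}$ in each block is divisible by $\Phi_n(q)$.

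The main obstacle will be the second-order bookkeeping. The telescoped remainder sum must come out exactly in the stated form with denominator $[dk-d+r]^3$, and the mixed terms must cancel modulo $[n]\Phi_n(q)^4$ rather than only modulo $\Phi_n(q)^4$. For the extra factor $[n]/\Phi_n(q)$ I would use the standard coprimality argument, since the polynomials involved are relatively prime for different cyclotomic factors. For the exact form of $G$, I would first run the algorithm symbolically on small cases ($d=2,4$) and guess the certificate. I would then verify the recurrence by direct rational-function simplification.
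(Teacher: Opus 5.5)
There is a genuine gap. The telescoping identity, which carries the whole proof, is never actually set up, and the family you propose cannot serve.

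First, the family itself is unusable. For $N\ge1$ the factor $(q^{d-dN};q^d)_k$ in your denominator vanishes once $k\ge N$, so summing your relation over $N$ passes through members of the family that have poles on the summation range. Moreover, for $d\ge2$, since $\gcd(n,d)=1$, no integer $N$ gives $q^{dN}=q^n$, so Guo's deformation $a=q^n$ is not in the family at all.

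Second, a relation $F(N,k)-F(N-1,k)=G(N,k+1)-G(N,k)$ with constant coefficients only says that $\sum_kF(N,k)-\sum_kF(0,k)$ is a sum of boundary terms. Nothing in it produces the product $q^{r(r-n)/d}(q^{2r};q^d)_{(n-r)/d}/(q^d;q^d)_{(n-r)/d}$, and you identify no $N$ at which $\sum_kF(N,k)$ is known in closed form. Also, the $[n]^4$ sum is not itself a boundary term; in the paper it only emerges after reducing the boundary terms modulo $\Phi_n(q)$.

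Third, the fallback of expanding Guo's deformation in $1-a$ does not close the gap. By $(1-aq^x)(1-q^x/a)=(1-q^x)^2-q^x(1-a)^2/a$, the original sum differs from the Jackson-summable one by $(1-q^n)^2T_2+(1-q^n)^4T_4+\cdots$. Here $T_2$ is, up to a power of $q$, the original sum with its $k$-th term weighted by $\sum_{j<k}\bigl(q^{r+dj}/(1-q^{r+dj})^2-q^{d+dj}/(1-q^{d+dj})^2\bigr)$. Reaching the modulus $[n]\Phi_n(q)^4$ requires $T_2$ modulo $\Phi_n(q)^3$ and $T_4$ modulo $\Phi_n(q)$, and you never address either evaluation. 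The identity $(1-q^n)q^{kd}/[kd]^2=[n](1-q)q^{kd}/[kd]^2$ is bookkeeping and cannot produce the denominators $[dk-d+r]^3$.

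Fourth, for $M=n-1$: $(q^d;q^d)_k$ has no factor $\Phi_n(q)$ when $k\le n-1$, so the extra terms are in general individually divisible only by $\Phi_n(q)^4$. A real cancellation argument is needed, including for the other cyclotomic factors of $[n]$. The paper takes this from the Xu--Wang lemma at $a=b=1$.

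The missing idea is the paper's choice of pair. Keep $m$ as the summation index and let an auxiliary $k$ enter through the lengths of the factorials:
\[
F(m,k)=\bigl(\text{$m$-th summand}\bigr)\cdot\frac{(q^{r+dm},q^{-dm};q^d)_k}{(q^r;q^d)_k^2}.
\]
This satisfies the non-WZ recurrence $[dk-d+r]F(m,k-1)-[dk-d+2r]F(m,k)=G(m+1,k)-G(m,k)$. Sum over $0\le m\le N=(n-r)/d$, using $G(0,k)=0$, and iterate in $k$ up to $N$. Everything collapses because $F(m,N)=0$ for $m<N$. Since $\prod_{j=1}^{k}[dj-d+2r]/[dj-d+r]=(q^{2r};q^d)_k/(q^r;q^d)_k$, one gets the exact identity
\[
\sum_{m=0}^{N}F(m,0)=F(N,N)\frac{(q^{2r};q^d)_N}{(q^r;q^d)_N}+\sum_{k=1}^{N}\frac{G(N+1,k)}{[dk-d+r]}\cdot\frac{(q^{2r};q^d)_{k-1}}{(q^r;q^d)_{k-1}}.
\]
The first term is expanded through
\[
(1-q^{n-kd})(1-q^{n+kd})=-(1-q^{kd})^2q^{n-kd}+q^n(1-q^n)^2+q^n(1-q^n)^3+(1-q^n)^4,
\]
which is the source of $[n]^3+[n]^4(1-q)$. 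Each $G(N+1,k)$ contains $(1-q^n)^4$ explicitly. So it only needs to be evaluated modulo $\Phi_n(q)$ using $q^n\equiv1$, which yields the $[dk-d+r]^3$ sum. It is then lifted to modulus $[n]\Phi_n(q)^4$ by checking divisibility by $[n]$ and using $\operatorname{lcm}([n],\Phi_n(q)^5)=[n]\Phi_n(q)^4$.
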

For $d=2,r=1$, Theorem~\ref{thm:main-1} can be stated as follows.
\begin{cor}
Let $n$ be a positive odd integer. Then, modulo $[n]\Phi_n(q)^4$,
    \begin{align}\label{eq:d=2}
        \sum_{k=0}^{M}[4k+1]\frac{(q;q^2)_k^4}{(q^2;q^2)_k^4}
        &\equiv q^{(1-n)/2}\Bigg([n]-([n]^3+[n]^4(1-q))\sum_{k=1}^{(n-1)/2}\frac{q^{2k}}{[2k]^2}\Bigg)\nonumber\\
        &\hspace{1em}+[n]^4\sum_{k=1}^{(n-1)/2}q^{2k}\frac{(q^2;q^2)_{k-1}^2}{[2k-1]^3(q;q^2)_{k-1}^2},
    \end{align}
    where $M=(n-1)/2$ or $n-1$.
\end{cor}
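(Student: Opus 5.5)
Since the paper says it uses the $q$-Zeilberger algorithm, I would first produce a WZ-type telescoping identity for a parametric version of the summand. Put $m=(n-r)/d$ and introduce a free parameter $a$ in place of $q^n$:
\[
F(m,k)=[2dk+r]\frac{(q^r;q^d)_k^2\,(q^{r-dm};q^d)_k\,(q^{r+dm};q^d)_k}{(q^d;q^d)_k^2\,(q^{d+dm};q^d)_k\,(q^{d-dm};q^d)_k}\,q^{(d-2r)k}.
\]
This is a terminating very-well-poised sum. When $q^{dm}\to q^{n-r}$, the factors $(q^{r\mp dm};q^d)_k$ become $(q^{2r-n};q^d)_k$ and $(q^{n};q^d)_k$. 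Modulo $\Phi_n(q)$, these approximate $(q^r;q^d)_k^2$ up to controlled errors of order $(1-q^n)^2$.

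Step one is to run $q$-Zeilberger on $F(m,k)$ in the variable $m$. The goal is a first-order recurrence with a certificate $G(m,k)$ such that
\[
F(m,k)-c(m)F(m-1,k)=G(m,k+1)-G(m,k).
\]
Summing over $k$ gives a closed form, or near closed form, for $S(m)=\sum_k F(m,k)$. I expect this to reproduce the Jackson ${}_6\phi_5$ evaluation used by Guo, namely $S(m)=q^{\cdots}(q^{2r};q^d)_m/(q^d;q^d)_m$ times elementary factors. It should also produce an extra boundary term coming from $G$ evaluated at the truncation point $k=m$ or $k=n-1$.

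Step two is to replace the true summand $[2dk+r](q^r;q^d)_k^4/(q^d;q^d)_k^4\,q^{(d-2r)k}$ by the parametric one. For this I would write
\[
(q^{r};q^d)_k^2-(q^{r-n};q^d)_k(q^{r+n};q^d)_k
\]
and the analogous denominator difference as expansions in $(1-q^n)$. The first-order terms cancel by the usual symmetry $a\leftrightarrow a^{-1}$. The second-order terms give $[n]^2\sum_j q^{dj}/[dj]^2$-type corrections, and the fourth-order pieces survive modulo $[n]\Phi_n(q)^4$. Tracking one more order than in Guo's argument is exactly what produces the new $[n]^4(1-q)\sum q^{kd}/[kd]^2$ term. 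The third-order contributions should assemble into the final sum over $k$ with $[dk-d+r]^3$ in the denominator. That sum is the boundary/certificate term, arising from $G(m,k)$ summed along the telescoping in $m$ and then reduced modulo $\Phi_n(q)^4$.

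Step three handles the two choices of $M$. For $(n-r)/d<k\le n-1$, the factor $(q^r;q^d)_k^4$ contains $(1-q^{n})^4$-type divisibility, coming from the term $q^{r+dj}$ with $r+dj\equiv 0\pmod n$, against a denominator with at most one $\Phi_n(q)$ factor. This uses $\gcd(n,d)=1$ and the range condition $n+d-nd\le r$. The terms beyond $(n-r)/d$ therefore vanish modulo $\Phi_n(q)^4$, and a further pairing argument $k\leftrightarrow$ its mirror upgrades this to $[n]\Phi_n(q)^4$. The modulus $[n]$ itself would be handled by checking the congruence modulo $\Phi_e(q)$ for each divisor $e\mid n$, $e<n$, via the standard root-of-unity evaluation as in Guo's work.

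The main obstacle will be step two: carrying the expansion in $1-q^n$ to fourth order and matching it exactly with the $q$-Zeilberger certificate so that the residual equals the stated $[dk-d+r]^{-3}$ sum. I would first verify the $d=2,r=1$ case symbolically for small $n$ to pin down the correct certificate normalization before doing the general bookkeeping.
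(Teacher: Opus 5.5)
Your proposal has a genuine gap. The deformation in step one is not well defined, and step two, which has to carry all the work, contains no mechanism that produces the stated terms.

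\textbf{The family in step one.} With $q^{dm}=q^{n-r}$, the denominator $(q^{d-dm};q^d)_k$ contains the factor $1-q^{d-dm+d(m-1)}=1-q^{0}$ for every $k\geqslant m$. No numerator factor vanishes, since all numerator exponents are $\equiv r\not\equiv 0\pmod d$. So already the term $k=m=(n-r)/d$ is a pole. Moreover $(q^{n};q^d)_k\equiv 0\pmod{\Phi_n(q)}$ for $k\geqslant 1$, so your substituted factors do not approximate $(q^r;q^d)_k^2$ at all.

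In step two you actually use $(q^{r-n},q^{r+n};q^d)_k$ over $(q^{d+n},q^{d-n};q^d)_k$, which is the creative-microscoping value $q^{dm}=q^{n}$. Since $\gcd(n,d)=1$, this is not an integer $m$. Hence no recurrence in $m$ starting from the true summand at $m=0$ can reach it. At that value Jackson's ${}_6\phi_5$ formula evaluates the deformed sum outright, so there is no certificate boundary term from which a $[dk-d+r]^{-3}$ sum could arise.

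\textbf{The error terms in step two.} Write $T_k$ for the true summand. Then $T_k-T_k^{(q^n)}=T_k\bigl((1-q^n)^2\alpha_k+(1-q^n)^3\beta_k+\cdots\bigr)$ with harmonic-type weights $\alpha_k,\beta_k$. Reaching the modulus $[n]\Phi_n(q)^4$ therefore requires $\sum_kT_k\alpha_k$ modulo $\Phi_n(q)^3$ and $\sum_kT_k\beta_k$ modulo $\Phi_n(q)^2$. These weighted sums are as hard as the original problem. Jackson's formula holds only at isolated parameter values where the series terminates, so it gives no handle on them. Your claim that the third-order pieces ``assemble'' into the $[dk-d+r]^{-3}$ sum therefore has no justification.

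\textbf{What the paper does instead.} The missing idea is an exact discrete deformation, which removes the approximation step entirely. The paper replaces one factor $(q^r;q^d)_m/(q^d;q^d)_m$ of the summand by $(-1)^kq^{\cdots}(q^r;q^d)_{m+k}/\bigl((q^d;q^d)_{m-k}(q^r;q^d)_k^2\bigr)$. Then $F(m,0)$ is the summand, and $F(m,N)=0$ for $m<N=(n-r)/d$. It uses the relation $[dk-d+r]F(m,k-1)-[dk-d+2r]F(m,k)=G(m+1,k)-G(m,k)$.

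Summing over $0\leqslant m\leqslant N$ and iterating in $k$ from $1$ to $N$ gives an exact identity. The truncated sum equals the single product $F(N,N)\prod_{j=1}^{N}[dj-d+2r]/[dj-d+r]$, which is $\pm q^{\cdots}[2n-r](q^r;q^d)_{2N}(q^{2r};q^d)_N/(q^d;q^d)_N^3$. To this are added the terms $G(N+1,k)\prod_{j=1}^{k-1}([dj-d+2r]/[dj-d+r])\,[dk-d+r]^{-1}$.

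\begin{itemize}
\item After extracting $[n]$, the product is expanded modulo $\Phi_n(q)^4$ using $(1-q^{n-kd})(1-q^{n+kd})+(1-q^{kd})^2q^{n-kd}-q^n(1-q^n)^2-q^n(1-q^n)^3=(1-q^n)^4$. This identity is the source of the $[n]^4(1-q)$ term.
\item Each $G(N+1,k)$ carries the explicit factor $(1-q^n)^4$ coming from $(q^r;q^d)_{N+1}^3(q^r;q^d)_{N+k}$. So its cofactor is needed only modulo $\Phi_n(q)$, and that reduction is exactly the $[dk-d+r]^{-3}$ sum.
\end{itemize}

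For $M=n-1$, your mirror-pairing argument is only asserted. The paper obtains the tail from the Xu--Wang congruence modulo $[n]\Phi_n(q)(1-aq^n)(a-q^n)(b-q^n)$ at $a=b=1$.
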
 
Throughout the paper, the harmonic numbers of order $r$ are defined by
\[H^{(r)}_m=\sum_{k=1}^{m}\frac{1}{k^r} \hspace{1em}\text{with}\hspace{1em} r,m\in \Z^+.\]
Letting $q\to 1$ and $n=p$ in \eqref{eq:d=2}, and making use of the congruences
\[
\sum_{k=1}^{(p-1)/2}\frac{16^{k-1}}{(2k-1)^3\binom{2k-2}{k-1}^2}
\equiv \frac{7}{4}B_{p-3}\pmod{p},
\]
and for $p\geqslant 5$,
\[
H_{\frac{p-1}{2}}^{(2)}\equiv \frac{7}{3}pB_{p-3}\pmod{p^2},
\]
proposed by Z.-W.~Sun~\cite{Sun} and Z.-H.~Sun~\cite[Corollary~5.2]{Sunzh}, respectively.
We immediately obtain the case $r=1$ of \eqref{eq:WH}. Therefore, the $q$-supercongruence 
\eqref{eq:d=2} is a partial $q$-analogue of \eqref{eq:WH}.

Taking $q\to1$, $d=4$, $r=1$, and $n=p$ in Theorem~\ref{thm:main-1}, we obtain a result on the (G.2) supercongruence modulo $p^5$.
\begin{cor}
Let $p\equiv 1\pmod4$ be a prime. Then, modulo $p^5$,
    \begin{align*}
\sum_{k=0}^{M}(8k+1)\frac{\big(\tfrac14\big)_k^{4}}{(1)_k^{4}
}
\equiv
\frac{\big(\tfrac12\big)_{(p-1)/4}}{(1)_{(p-1)/4}}
\left(
p-\frac{p^{3}}{16}
H_{\frac{p-1}{4}}^{(2)}
\right)
+p^{4}
\sum_{k=1}^{(p-1)/4}
\frac{(1)_{k-1}\big(\tfrac12\big)_{k-1}}
{(4k-3)^{3}\big(\tfrac14\big)_{k-1}^{2}},
\end{align*}
where $M=(p-1)/4$ or $p-1$.
\end{cor}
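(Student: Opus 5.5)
The plan is to apply the $q$-Zeilberger algorithm to a parametric version of the summand and telescope. Put $m=(n-r)/d$ and introduce a parameter $a$ (later $a=q^n$). Consider
\[
F(k)=[2dk+r]\frac{(q^r;q^d)_k^2\,(q^{r+n},q^{r-n};q^d)_k}{(q^d;q^d)_k^2\,(q^{d-n},q^{d+n};q^d)_k}\,q^{(d-2r)k},
\]
or, more flexibly, the summand with two of the four factors $(q^r;q^d)_k$ replaced by $(aq^r,q^r/a;q^d)_k$ and the corresponding denominators by $(q^d/a,aq^d;q^d)_k$. We have $q^{r\pm n}\equiv q^r$ modulo $\Phi_n(q)$. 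The $q$-Zeilberger algorithm, run in the parameter $n$ (or $a$), should produce a rational certificate $G(n,k)$ with
\[
F(n,k)-F(n-d,k)\ \text{or}\ \alpha_n F(n,k)-\beta_n F(n-d,k)=G(n,k+1)-G(n,k).
\]
Summing over $k$ and iterating in $n$ down to $n=r$ gives a closed form for the truncated sum. This closed form is the product $q^{r(r-n)/d}(q^{2r};q^d)_m/(q^d;q^d)_m\cdot[n]$, which is Jackson's ${}_6\phi_5$-type evaluation, plus a boundary-term sum over $k$. The last sum in the theorem, with the factor $q^{(kd-d-2n+2r)(d-n)/d}$ and $[dk-d+r]^3$ in the denominator, should be exactly this accumulated boundary term, evaluated at the index where $G$ terminates.

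In order, the steps are as follows.

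\begin{enumerate}
\item Find the Zeilberger pair $(F,G)$, verify the recurrence by a routine rational-function identity, and sum over $k$ from $0$ to $M$. For $M=m$ the telescoping sum is finite. For $M=n-1$, note that $(q^r;q^d)_k^4/(q^d;q^d)_k^4\equiv0$ modulo $\Phi_n(q)^4$ for $m<k\le n-1$, since $(q^r;q^d)_k$ picks up a factor $1-q^{r+dm}=1-q^n$. This handles $M=n-1$ up to the $[n]$ factor, once one checks that the denominators carry no $\Phi_n$ in that range; this is where $\gcd(n,d)=1$ and $r\ge n+d-nd$ are used.
\item Relate the actual sum $\sum[2dk+r](q^r;q^d)_k^4/(q^d;q^d)_k^4q^{(d-2r)k}$ to the parametric sum at $a=q^n$. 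Expand
\[
\frac{(q^r;q^d)_k^2}{(q^{r+n},q^{r-n};q^d)_k}\cdot\frac{(q^{d-n},q^{d+n};q^d)_k}{(q^d;q^d)_k^2}
\]
in powers of $(1-q^n)$. The first-order terms cancel by symmetry $n\leftrightarrow -n$, so the difference is $(1-q^n)^2$ times a sum of squared-harmonic-type terms $\sum_j q^{dj}/[dj]^2$ and $\sum_j q^{dj+r}/[dj+r]^2$, up to $O((1-q^n)^4)$.
\item Multiply by the remaining summand, which is already divisible by $[n]$ in the relevant range, to get accuracy modulo $[n]\Phi_n(q)^4$. This produces the correction $-([n]^3+[n]^4(1-q))\sum_{k\le m}q^{kd}/[kd]^2$; the $[n]^4(1-q)$ piece comes from $q^{n}=1-(1-q)[n]$ in the expansion. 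It also produces the $[n]^4$ boundary sum.
\end{enumerate}

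The main obstacle is step 2, carried to fourth order. The expansion must be done modulo $\Phi_n(q)^4$ rather than $\Phi_n(q)^3$ as in Guo's proof of \eqref{eq:Guo-uni}. The third-order terms do not vanish by symmetry and must be shown to recombine, together with the Zeilberger boundary term, into the stated cubic-denominator sum. I would first try to avoid expanding by hand. The idea is to choose the certificate so that the parameter $a$ stays symbolic: derive the exact identity for general $a$, then set $a=q^n$ only at the end, so that all error terms are visibly multiples of $(1-aq^{-n})^2(1-aq^n)^2$-type products. Finally, I would reduce $(q^{2r};q^d)_m/(q^d;q^d)_m$ and the $q$-powers modulo $\Phi_n(q)$ using $q^n\equiv1$ to match the stated form.
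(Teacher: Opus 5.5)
There is a genuine gap. The step that produces the fourth-order terms is never supplied, and the route you sketch cannot supply it as set up. A $q$-Zeilberger recurrence in $n$ of the form $F(n,k)-F(n-d,k)=G(n,k+1)-G(n,k)$ only re-derives the terminating Jackson ${}_6\phi_5$ value at $a=q^n$. Nothing in it generates the sum $[n]^4\sum_k(\cdots)/[dk-d+r]^3$, which you merely assert ``should be'' a boundary term. Step~2 then leaves you with weighted sums $\sum_{k\le m}S_k(q^n)h_k$ (deformed summand times harmonic-type weights) that must be known to high $\Phi_n$-adic precision. You give no way to evaluate them, and you yourself flag the third-order recombination as unresolved.

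Moreover, the claim in Step~3 that the summand is ``already divisible by $[n]$ in the relevant range'' is false. For $0\le k\le m$ the terms need not be divisible by $\Phi_n(q)$ at all: the $k=0$ term is $[r]$, which is $1$ in this corollary. So the error bookkeeping toward $[n]\Phi_n(q)^4$ ($=\Phi_p(q)^5$ here) breaks down.

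The paper instead uses a WZ-type pair in which $k$ is a deformation index. Here $F(m,0)$ is the summand, $F(m,k)$ contains $(q^r;q^d)_{m+k}/(q^d;q^d)_{m-k}$, and $[dk-d+r]F(m,k-1)-[dk-d+2r]F(m,k)=G(m+1,k)-G(m,k)$. Summing over $0\le m\le m_0=(n-r)/d$ and iterating in $k$ gives an exact identity. The truncated sum equals the closed product $F(m_0,m_0)\prod_{j\le m_0}[dj-d+2r]/[dj-d+r]$ plus $\sum_{k\le m_0}G(m_0+1,k)\prod_{j<k}\frac{[dj-d+2r]}{[dj-d+r]}\cdot\frac{1}{[dk-d+r]}$.
\begin{itemize}
\item Each $G(m_0+1,k)$ explicitly carries $(1-q^n)^4$: three factors come from $(q^r;q^d)_{m_0+1}^3$ and one from $(q^r;q^d)_{m_0+k}$. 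Its cofactor therefore only needs reducing modulo $\Phi_n(q)$ via $q^n\equiv 1$, plus a separate divisibility-by-$[n]$ check. That is exactly where the cubic-denominator sum comes from.
\item The product term is expanded exactly using $(1-q^{n-kd})(1-q^{n+kd})+(1-q^{kd})^2q^{n-kd}-q^n(1-q^n)^2-q^n(1-q^n)^3=(1-q^n)^4$. This yields the $[n]^3+[n]^4(1-q)$ correction.
\end{itemize}

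Two further points are missing for this particular statement.

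First, for $M=p-1$, termwise vanishing modulo $\Phi_n(q)^4$ is not enough. With $d=4$, $r=1$, $n=p$, every tail term $(8k+1)(\tfrac14)_k^4/k!^4$ with $(p-1)/4<k\le p-1$ has $p$-adic valuation exactly $4$. The only possible exception is $k=(5p-1)/8$ when $p\equiv 5\pmod 8$. For example, for $p=5$ the $k=2$ and $k=4$ terms are $\equiv 1250$ and $1875 \pmod{5^5}$, and only their sum vanishes. So the extra power of $p$ is a genuine cancellation. The paper gets it from the Xu--Wang tail congruence, its Lemma~2.1 at $a=b=1$, which holds modulo $[n]\Phi_n(q)(1-q^n)^3$.

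Second, the corollary itself is Theorem~\ref{thm:main-1} with $d=4$, $r=1$, $n=p$ and $q\to 1$, and you never carry this out. It needs the following:
\begin{itemize}
\item $[p]\Phi_p(q)^4=\Phi_p(q)^5$ with $\Phi_p(1)=p$.
\item No denominator involves $\Phi_{p^j}(q)$.
\item The limits $[n]^4(1-q)\to 0$ and $q^{4k}/[4k]^2\to 1/(16k^2)$, which give $\tfrac{p^3}{16}H^{(2)}_{(p-1)/4}$.
\item The limit $(q^a;q^4)_j/(1-q)^j\to 4^j(\tfrac a4)_j$, under which the powers of $4$ cancel in the last sum.
\end{itemize}
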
 
Note that Guo and Schlosser\cite{Guo-Scho} gave a generalization of \eqref{eq:Swisher} as follows:
Let $d,n,r$ be integers satisfying $d\geqslant2, r\leqslant d-2$, and $n\geqslant d-r$, such that $d$ and 
$r$ are coprime, and $n\equiv -r\pmod{d}$. Then
\begin{align}\label{eq:GS}
    \sum_{k=0}^{M}&[2dk+r]\frac{(q^r;q^d)^4_k}{(q^d;q^d)^4_k}q^{(d-2r)k}\nonumber\\
    &\equiv \begin{cases}
        0 \pmod{[n]\Phi_n(q)^3},   & \text{if} \hspace{0.5em}d=2,\\[6pt]
        \displaystyle
        q^{r(n+r-dn)/d}\frac{(q^{2r};q^d)_{((d-1)n-r)/d}}{(q^d;q^d)_{((d-1)n-r)/d}}[(d-1)n]\pmod{[n]\Phi_n(q)^3}, & \text{if} \hspace{0.5em}d\geqslant3,
    \end{cases}
\end{align}where $M=((d-1)n-r)/d$ or $n-1$.\par
Another objective of this paper is to provide the following generalization of \eqref{eq:GS}.
\begin{thm}\label{thm:main-2}
    Let $n$ and $d\geqslant2$ be positive integers satisfying $\gcd(n,d)=1$. Let $r$ be an integer with $d-n\leqslant r \leqslant dn-n$ and 
    $n\equiv -r \pmod{d} $. Then, modulo $[n]\Phi_{n}(q)^4$,
    \begin{align*}
        &\sum_{k=0}^{M}[2dk+r]\frac{(q^r;q^d)_k^4}{(q^d;q^d)_k^4}q^{(d-2r)k}\nonumber\\
        &\hspace{2em}\equiv q^{r(n+r-dn)/d}\frac{(q^{2r};q^d)_{((d-1)n-r)/d}}{(q^{d};q^d)_{((d-1)n-r)/d}}\nonumber\\
        &\hspace{3em}\times\Bigg([(d-1)n]-([(d-1)n]^3+[(d-1)n]^4(1-q))\sum_{k=1}^{((d-1)n-r)/d}\frac{q^{kd}}{[kd]^2}\Bigg)\nonumber\\
        &\hspace{3em}+[(d-1)n]^4\sum_{k=1}^{((d-1)n-r)/d}q^{(kd-d-2dn+2n+2r)(d-dn+n)/d}\frac{(q^d;q^d)_{k-1}(q^{2r};q^d)_{k-1}}{[dk-d+r]^3(q^r;q^d)_{k-1}^2},
    \end{align*}
    where $M=((d-1)n-r)/d$ or $n-1$.
\end{thm}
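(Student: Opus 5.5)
The plan is to prove Theorem~\ref{thm:main-2} by the same route used for Theorem~\ref{thm:main-1}, with $n$ replaced by $N=(d-1)n$. Note that $N\equiv -n\equiv r\pmod d$ and $r\leqslant N$. The proof has two ingredients. The first is an exact closed form for the terminating truncated sum, obtained from the $q$-Zeilberger algorithm. The second is a Taylor-type expansion of that closed form in the variable $[N]$, read modulo $\Phi_n(q)^5$.

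\textbf{Step 1: a WZ-pair identity.} Introduce a parameter $a$ and set
\[
F(k,a)=[2dk+r]\frac{(q^r,q^r/a,q^r/a,aq^r;q^d)_k}{(q^d,aq^d,aq^d,q^d/a;q^d)_k}\,q^{(d-2r)k}\cdot(\text{suitable }a\text{-power}),
\]
or a similar one-parameter deformation. It is chosen so that at $a=q^{-N}$ the sum terminates at $k=(N-r)/d$, and Jackson's ${}_6\phi_5$ summation evaluates its $a$-free part. Running the $q$-Zeilberger algorithm produces $G(k,a)$ with
\[
F(k,a)-F(k,aq^d)=G(k+1,a)-G(k,a).
\]
Summing over $k$ expresses the sum at $a=q^{-N}$ as $q^{r(N'+\dots)}\frac{(q^{2r};q^d)_{(N-r)/d}}{(q^d;q^d)_{(N-r)/d}}[N]$ plus correction terms. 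These corrections are exactly the finite sums $\sum_{k}\frac{(q^d;q^d)_{k-1}(q^{2r};q^d)_{k-1}}{[dk-d+r]^3(q^r;q^d)_{k-1}^2}$ with the stated $q$-power. In practice I expect the four-parameter sum with $(q^r;q^d)_k^4$ to decompose as the "main" ${}_6\phi_5$ evaluation times $[N]$, a term in $[N]^3\sum q^{kd}/[kd]^2$ coming from $(q^{r\pm N};q^d)_k$-type factors, and a telescoped remainder of order $[N]^4$.

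\textbf{Step 2: expansion modulo $\Phi_n(q)^5$.} Since $\Phi_n(q)\mid[N]$, I expand $(q^{r-N};q^d)_k(q^{r+N};q^d)_k/(q^r;q^d)_k^2$ in powers of $[N]$ using
\[
(1-q^{dj-d+r-N})(1-q^{dj-d+r+N})=(1-q^{dj-d+r})^2-q^{dj-d+r-N}(1-q^N)^2 .
\]
This shows the summand agrees with the original $q$-binomial summand up to errors of order $[N]^2$ times harmonic-type sums. Keeping terms through $[N]^4$, where the $(1-q)[N]^4$ term arises from $q^{-N}=1-(1-q)[N]q^{-N}$ type corrections, reproduces the claimed right-hand side modulo $[n]\Phi_n(q)^4$. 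I need the denominators $[kd]$, $[dk-d+r]$ and $(q^d;q^d)_k$ to be coprime to $\Phi_n(q)$ for $k\leqslant (N-r)/d$. This holds because $\gcd(n,d)=1$ and $dk<N+d$, except where $n\mid dk$. Those exceptional terms must be shown to carry enough compensating factors $(1-q^{n\cdot})$ in the numerator, exactly as in the proof of \eqref{eq:GS} by Guo and Schlosser.

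\textbf{Step 3: the two choices of $M$.} For $(N-r)/d<k\leqslant n-1$, the factor $(q^r;q^d)_k^4$ contains $(1-q^{N+\cdots})^4$-type factors divisible by $\Phi_n(q)^4$, while $(q^d;q^d)_k^4$ is coprime to $\Phi_n(q)$. An extra $\Phi_n(q)$ or $[n]$ factor then comes from the pairing $k\leftrightarrow$ reflected index, a standard symmetry argument. So the sums up to $(N-r)/d$ and up to $n-1$ agree modulo $[n]\Phi_n(q)^4$.

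The main obstacle is Step~1, finding the right parametric identity whose Zeilberger certificate produces precisely the cubic-denominator sum with the exponent $(kd-d-2dn+2n+2r)(d-dn+n)/d$. I would first get it by directly substituting $n\mapsto -n$, i.e.\ $N=(d-1)n$ with $q^{-N}$, into the identity underlying Theorem~\ref{thm:main-1}. I would then check that the $[N]$-expansion's coefficient of $\Phi_n$-order five vanishes modulo $[n]\Phi_n(q)^4$, handling the prime-to-$n$ part of $[N]$ via $[N]=[n][d-1]_{q^n}$.
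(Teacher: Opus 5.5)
Your proposal has a genuine gap at its centre. Step~1 never produces an identity, and the mechanism you sketch cannot produce the one you need. A certificate relation $F(k,a)-F(k,aq^d)=G(k+1,a)-G(k,a)$ only links the sums at $a$ and $aq^d$. The sum you must evaluate is the undeformed one at $a=1$. Jackson's ${}_6\phi_5$, by contrast, evaluates the deformed sum at $a=q^{-N}$ exactly, with no ``correction terms''. Since $\gcd((d-1)n,d)=1$ and $d\geqslant 2$, the points $a=1$ and $a=q^{-N}$ are not joined by steps $a\mapsto aq^d$, so telescoping in $a$ cannot relate them.

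Falling back on Step~2, a term-by-term comparison of deformed and undeformed summands, does not close the gap. It leaves error terms of size $[N]^2\sum_k(\text{summand}_k)$ times partial sums of $q$-powers over $[dj-d+r]^2$ or $[dj]^2$, plus $[N]^4$-terms involving products of such sums. For $0\leqslant k\leqslant N'$, all but at most one of the summands are coprime to $\Phi_n(q)$. You would therefore need these weighted harmonic sums modulo $\Phi_n(q)^3$. Nothing in the proposal evaluates them, and it is not clear how they would reduce to the single cubic-denominator sum in the statement. What you call ``the main obstacle'' is in fact the whole proof.

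The missing idea is the paper's two-index $q$-Zeilberger pair $F(m,k)$, $G(m,k)$ from Section~3. There the deformation parameter is a discrete index $k$, not a continuous $a$. $F(m,0)$ is exactly the original summand, and the recurrence \eqref{eq:qz} steps $k$ from $0$ up to $N'=((d-1)n-r)/d$. At $k=N'$ the factor $1/(q^d;q^d)_{m-k}$ kills every term with $m<N'$. This gives the exact identity \eqref{eq:FG-dn}, with two kinds of terms.
\begin{itemize}
\item The single main term $F(N',N')\prod_j[dj-d+2r]/[dj-d+r]$. Lemma~\ref{lem:F-dn} expands it modulo $[n]\Phi_n(q)^4$ using the polynomial identity for $(1-q^{N-kd})(1-q^{N+kd})$. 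This is where the $[N]^3$ and $(1-q)[N]^4$ harmonic terms come from.
\item The boundary terms $G(N'+1,k)$. Each carries the explicit factor $(1-q^{(d-1)n})^4$, so the remaining factor only has to be reduced modulo $\Phi_n(q)$. That reduction yields the cubic-denominator sum with exponent $(kd-d-2dn+2n+2r)(d-dn+n)/d$ directly, with no weighted harmonic sums to evaluate.
\end{itemize}
Divisibility by $[n]$ (Lemmas~\ref{lem:G-div-dn} and \ref{lem:q-integers-dn}) together with $\operatorname{lcm}([n],\Phi_n(q)^5)=[n]\Phi_n(q)^4$ then lifts everything to the stated modulus. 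Your Step~3 is fine in outline; the paper obtains the case $M=n-1$ from the second Xu--Wang lemma of Section~2 with $a=b=1$.
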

Substituting $d=4$, $r=1$, and $n=p$ into Theorem~\ref{thm:main-2} and letting $q\to1$,
we strengthen the supercongruence \eqref{eq:Swisher} to modulo $p^5$.
\begin{cor}
    Let $p\equiv 3\pmod4$ be a prime. Then, modulo $p^5$
\begin{align}\label{eq:dn-d=4}
\sum_{k=0}^{M}(8k+1)\frac{\big(\tfrac14\big)_k^{4}}{(1)_k^{4}}
&\equiv
\frac{\big(\tfrac12\big)_{(3p-1)/4}}{(1)_{(3p-1)/4}}
\left(
3p-\frac{27p^3}{16}\,H_{(3p-1)/4}^{(2)}
\right)
+81p^4
\sum_{k=1}^{(3p-1)/4}
\frac{(1)_{k-1}\big(\tfrac12\big)_{k-1}}
{(4k-3)^3\big(\tfrac14\big)_{k-1}^2},
\end{align}
where $M=(3p-1)/4$ or $p-1$.
\end{cor}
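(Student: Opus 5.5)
The plan is to specialize Theorem~\ref{thm:main-2} to $d=4$, $r=1$, $n=p$ and then let $q\to1$.

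First I check the hypotheses. For a prime $p\equiv3\pmod4$ we have $\gcd(p,4)=1$ and $p\equiv-1\pmod4$, so $n\equiv-r\pmod d$. The condition $d-n\leqslant r\leqslant dn-n$ reads $4-p\leqslant1\leqslant3p$, which holds since $p\geqslant3$.

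Substituting gives $((d-1)n-r)/d=(3p-1)/4$. The left side becomes
\[
\sum_{k=0}^{M}[8k+1]\frac{(q;q^4)_k^4}{(q^4;q^4)_k^4}q^{2k},
\]
with $M=(3p-1)/4$ or $p-1$. On the right, $[(d-1)n]=[3p]$, and the leading factor is
\[
q^{(1+1-4p)\cdot\,\cdot/4}\,\frac{(q^2;q^4)_{(3p-1)/4}}{(q^4;q^4)_{(3p-1)/4}}.
\]
The double sum carries $q$-powers and $q$-Pochhammer ratios
\[
\frac{(q^4;q^4)_{k-1}(q^2;q^4)_{k-1}}{[4k-3]^3(q;q^4)_{k-1}^2}.
\]

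Next I pass to $q\to1$. Both sides are polynomials, or rational functions whose denominators are coprime to $\Phi_p(q)$, and the congruence holds modulo $[p]\Phi_p(q)^4$. Setting $q=1$ turns $[p]\Phi_p(q)^4$ into $p^5$, so the specialization is a congruence modulo $p^5$ in $\mathbb{Z}_{(p)}$.

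I must check that all denominators stay $p$-adic units at $q=1$. On the left, $(q^4;q^4)_k$ with $k\leqslant p-1$ contains no factor $1-q^{4j}$ with $p\mid j$. On the right:
\begin{itemize}[leftmargin=2em]
\item $(q^4;q^4)_{(3p-1)/4}$ has $j\leqslant(3p-1)/4<p$, so no factor vanishes $p$-adically;
\item $[4k]$ and $[4k-3]$ with $k\leqslant(3p-1)/4$ satisfy $4k\leqslant 3p-1$ and $4k-3\neq p,2p$;
\item $(q;q^4)_{k-1}$ has factors $1-q^{4j+1}$ with $4j+1\leqslant 3p-6$.
\end{itemize}
The delicate cases are $4j+1=p$ (impossible since $p\equiv3\pmod4$) and $4j+1=2p$ (impossible by parity). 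Likewise $4k-3=p$ or $2p$ is excluded, since $p\equiv3$ forces $4k-3\equiv1\pmod4$. These unit checks are the main point requiring care.

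Finally I evaluate the limits. As $q\to1$, $[3p]\to3p$ and $(q^a;q^4)_k/(1-q)^k\to4^k(a/4)_k$. Hence
\[
\frac{(q^2;q^4)_m}{(q^4;q^4)_m}\to\frac{(\tfrac12)_m}{(1)_m}, \qquad \sum_{k}\frac{q^{4k}}{[4k]^2}\to\frac1{16}H^{(2)}_{(3p-1)/4}.
\]
The term $[3p]^4(1-q)$ vanishes, and all $q$-powers tend to $1$. In the last sum the powers of $4$ and of $1-q$ balance: $4^{k-1}\cdot4^{k-1}/4^{2(k-1)}=1$, with matching powers of $1-q$. This gives
\[
\frac{(1)_{k-1}(\tfrac12)_{k-1}}{(4k-3)^3(\tfrac14)_{k-1}^2},
\]
multiplied by $(3p)^4=81p^4$. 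The bracket becomes
\[
3p-27p^3\cdot\tfrac1{16}H^{(2)}_{(3p-1)/4},
\]
which is exactly \eqref{eq:dn-d=4}.
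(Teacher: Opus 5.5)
Your proposal is correct and follows the paper's own argument: specialize Theorem~\ref{thm:main-2} to $d=4$, $r=1$, $n=p$ and let $q\to1$. Your check that no denominator contains a factor $1-q^m$ with $p\mid m$ supplies the $p$-integrality detail the paper leaves implicit. The garbled leading exponent should read $r(n+r-dn)/d=(1-3p)/4$, which is an integer since $p\equiv 3\pmod{4}$, but this does not matter because all $q$-powers tend to $1$.
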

Comparing \eqref{eq:Swisher} with \eqref{eq:dn-d=4}, and utilizing 
the fact that $\Gamma_p(\tfrac14)\Gamma_p(\tfrac34)=(-1)^{(p+1)/4}$ 
for $p\equiv 3 \pmod4$, we derive the following result.
\begin{cor}
    Let $p\equiv3\pmod4$ be a prime with $p\geqslant5$. Then,
    \begin{align*}
        \frac{\big(\tfrac12\big)_{(3p-1)/4}}{(1)_{(3p-1)/4}}
\left(
3p-\frac{27p^3}{16}\,H_{(3p-1)/4}^{(2)}
\right)\equiv (-1)^{(p-3)/4}\frac32p^2\Gamma_p(\tfrac14)^2\Gamma_p(\tfrac12)\pmod{p^4}.
    \end{align*}
\end{cor}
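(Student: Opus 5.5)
The plan is to deduce this directly from the two modulo-$p^4$ (or better) evaluations of the same truncated sum: Swisher's congruence \eqref{eq:Swisher} and the corollary \eqref{eq:dn-d=4} obtained from Theorem~\ref{thm:main-2}. Both have left-hand side $\sum_{k=0}^{(3p-1)/4}(8k+1)(\tfrac14)_k^4/(1)_k^4$ (take $M=(3p-1)/4$ in \eqref{eq:dn-d=4}). Equating their right-hand sides modulo $p^4$ will give
\[
\frac{(\tfrac12)_{(3p-1)/4}}{(1)_{(3p-1)/4}}\Big(3p-\frac{27p^3}{16}H^{(2)}_{(3p-1)/4}\Big)+81p^4S\equiv -\frac{3p^2\Gamma_p(\tfrac12)\Gamma_p(\tfrac14)}{2\Gamma_p(\tfrac34)}\pmod{p^4},
\]
where
\[
S=\sum_{k=1}^{(3p-1)/4}\frac{(1)_{k-1}(\tfrac12)_{k-1}}{(4k-3)^3(\tfrac14)_{k-1}^2}.
\]

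\emph{Step 1: rewrite the right-hand side.} Since $p\equiv3\pmod 4$, we have $\Gamma_p(\tfrac14)\Gamma_p(\tfrac34)=(-1)^{(p+1)/4}$. Hence $1/\Gamma_p(\tfrac34)=(-1)^{(p+1)/4}\Gamma_p(\tfrac14)$, and the Swisher side becomes
\[
-(-1)^{(p+1)/4}\tfrac32p^2\Gamma_p(\tfrac14)^2\Gamma_p(\tfrac12)=(-1)^{(p-3)/4}\tfrac32p^2\Gamma_p(\tfrac14)^2\Gamma_p(\tfrac12),
\]
which is exactly the target. The reflection identity itself follows from the standard $\Gamma_p(x)\Gamma_p(1-x)=(-1)^{a_0(x)}$, with $a_0(\tfrac14)$ the least positive residue of $-\tfrac14$ modulo $p$. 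That residue is $(3p+1)/4$, which has the parity of $(p+1)/4$ when $p\equiv 3\pmod 4$.

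\emph{Step 2: dispose of $81p^4S$ modulo $p^4$.} This is the main obstacle. I would compute $\nu_p$ of each summand of $S$. The factor $(1)_{k-1}$ gains a factor $p$ once $k-1\ge p$. The factor $(\tfrac12)_{k-1}$ gains a factor $p$ once $k-1\ge (p+1)/2$. In the denominator, $(\tfrac14)_{k-1}$ acquires a factor $p$ only if $4j+1\equiv0\pmod p$ with $j\le k-2$. Since $p\equiv3\pmod 4$, the smallest such $4j+1$ is $3p$, i.e. $j=(3p-1)/4$, which lies outside the range, so $(\tfrac14)_{k-1}$ is a $p$-adic unit throughout. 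The dangerous factor is therefore $(4k-3)^3$, which is divisible by $p$ exactly when $4k-3=p$, i.e. $k=(p+3)/4$.

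For all other $k$ the summand is $p$-integral, so its contribution is $\equiv 0\pmod{p^4}$. For $k=(p+3)/4$ the term is $81p\cdot(1)_{(p-1)/4}(\tfrac12)_{(p-1)/4}/(\tfrac14)_{(p-1)/4}^2$, a priori only of order $p$. Thus the crux is to show this single term is absorbed consistently. I would first go back to the $q$-version (Theorem~\ref{thm:main-2} with $d=4$, $r=1$, $n=p$) and check how the singular index, where $[dk-d+r]=[n]$, is treated there: whether it combines with a matching singular piece of the first summand, or whether the congruence modulo $p^5$ in \eqref{eq:dn-d=4} implicitly requires grouping it with the main term. If such a cancellation occurs, the stated congruence follows. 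Otherwise this term has to be carried along explicitly, and I would expand $(1)_{(p-1)/4}(\tfrac12)_{(p-1)/4}/(\tfrac14)_{(p-1)/4}^2$ via $\Gamma_p$ to see that it matches a corresponding correction.

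\emph{Step 3: conclude.} After Step 2 the congruence displayed above reduces modulo $p^4$ to the claimed one. The remaining requirement, $p\ge5$, is needed only because \eqref{eq:Swisher} assumes it.
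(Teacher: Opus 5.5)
\textbf{There is a genuine gap, in Step 2.} Your overall route is the paper's: take $M=(3p-1)/4$ in \eqref{eq:dn-d=4}, compare with \eqref{eq:Swisher}, and rewrite $1/\Gamma_p(\tfrac34)$ using $\Gamma_p(\tfrac14)\Gamma_p(\tfrac34)=(-1)^{(p+1)/4}$. Your sign bookkeeping in Step~1 is correct. Step~2, however, you leave unresolved because of an arithmetic slip. You claim $p\mid 4k-3$ at $k=(p+3)/4$. But for $p\equiv 3\pmod 4$ we have $p+3\equiv 2\pmod 4$, so $(p+3)/4$ is not an integer and no such index exists. As written, your argument never establishes $81p^4S\equiv 0\pmod{p^4}$. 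The fallbacks you sketch (looking for a cancellation in the $q$-version, or carrying the term along) chase a term that is not there. Had such a term existed, the comparison alone would not give the stated congruence, so this is a real hole rather than a cosmetic one.

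The missing observation is the one you already used for $(\tfrac14)_{k-1}$. We have $4k-3=4(k-1)+1\equiv 1\pmod 4$, and the multiples of $p$ that are $\equiv 1\pmod 4$ are $3p,7p,11p,\dots$. For $k\le (3p-1)/4$ we have $4k-3\le 3p-4$, so every $4k-3$ in the range is a $p$-adic unit. The factors $(4j+1)/4$ of $(\tfrac14)_{k-1}$ are units for the same reason, and $(1)_{k-1}$ and $(\tfrac12)_{k-1}$ are $p$-integral. Hence $S\in\mathbb{Z}_p$, $81p^4S\equiv 0\pmod{p^4}$, and your displayed congruence gives the corollary immediately. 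This is exactly what the paper uses implicitly when it says ``comparing''.

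A smaller point: your justification of the reflection formula is garbled. $(3p+1)/4$ is not an integer for $p\equiv 3\pmod 4$. The correct exponent is $a_0(x)\in\{1,\dots,p\}$ with $a_0(x)\equiv x\pmod p$, which for $x=\tfrac14$ equals $(p+1)/4$. The least positive residue of $-\tfrac14$ is $(3p-1)/4$, whose parity is opposite to that of $(p+1)/4$, so your convention would give the wrong sign. The paper simply quotes $\Gamma_p(\tfrac14)\Gamma_p(\tfrac34)=(-1)^{(p+1)/4}$, and you can do the same.
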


\section{Preliminary lemmas}
In this section, we present several necessary lemmas that will be used in the subsequent proofs.
We will make essential use of two results established by Xu and Wang \cite{XW} in our proof. 
\begin{lem}
    Let $d,n$ be positive integers and $r$ be an integer with $n\geqslant r\geqslant n-dn+d$, such that $\gcd(n,d)=1$, 
    and $n\equiv r \pmod{d}$. Then, modulo $[n]\Phi_n(q)(1-aq^n)(a-q^n)(b-q^n)$,
    \begin{align*}
        \sum_{k=(n-r+d)/d}^{n-1}[2dk+r]\frac{(aq^r,q^r/a,q^r/b,q^r;q^d)_k}{(aq^d,q^d/a,bq^d,q^d;q^d)_k}b^kq^{(d-2r)k}\equiv0.
    \end{align*}
\end{lem}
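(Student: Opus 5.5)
We prove the congruence modulo each of the pairwise relatively prime factors $[n]\Phi_n(q)$, $1-aq^n$, $a-q^n$, $b-q^n$ of the modulus, and then combine by the Chinese remainder theorem in $\Z[a,a^{-1},b]$ localized suitably (the factors are coprime as polynomials in $a,b,q$). Write $c_k$ for the $k$-th summand and $m=(n-r)/d$, so the sum runs over $m+1\le k\le n-1$.

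\emph{Modulo $1-aq^n$ and $a-q^n$.} Setting $a=q^{-n}$ (respectively $a=q^n$), the factor $(aq^r;q^d)_k$ (respectively $(q^r/a;q^d)_k$) equals $(q^{r-n};q^d)_k$. Since $n\equiv r\pmod d$ and $r-n\le 0$, this contains the factor $1-q^{r-n+dm}=0$ as soon as $k\ge m+1$. Hence every term with $k\ge m+1$ vanishes. We must also check that the denominators $(aq^d,q^d/a;q^d)_k=(q^{d\mp n};q^d)_k$ do not vanish in the range $k\le n-1$. Because $\gcd(n,d)=1$, we have $dj\ne n$ for $1\le j\le n-1$, so these factors are nonzero. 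The same holds for $(q^d;q^d)_k$, and $(bq^d;q^d)_k$ is harmless since $b$ is generic.

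\emph{Modulo $b-q^n$.} Setting $b=q^n$, the factor $(q^r/b;q^d)_k=(q^{r-n};q^d)_k$ again vanishes for $k\ge m+1$. The denominator $(q^{d+n};q^d)_k$ is a polynomial in $q$ with no zero at $b=q^n$ in the relevant sense, though it is divisible by $\Phi_n(q)$ for some $k$. We handle this as in Guo--Schlosser: we regard the congruence modulo $b-q^n$ as an identity in $\mathbb{Q}(q)[a]$, where these denominators are invertible.

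\emph{Modulo $[n]\Phi_n(q)$.} This is the main step. Let $\zeta$ be a primitive $n'$-th root of unity for a divisor $n'\mid n$, $n'>1$. For $\Phi_n(q)^2$, the standard device is to pair the term $k$ with the term $k'$ determined by $dk'+r\equiv -(dk+r)\pmod n$, i.e. the reflection $k\mapsto$ (something like) $(n-r)/d\cdot$ modulo $n$. Using $q^n\equiv1$ one shows that $c_k+c_{k'}\equiv0\pmod{\Phi_n(q)^2}$. This works because $[2dk+r]$ changes sign under the reflection, while the $q$-factorial ratio, up to the factor $b^kq^{(d-2r)k}$, is invariant modulo $\Phi_n(q)$. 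The double zero comes from the factor $[2dk+r]$ together with one factor $1-q^{dj+r}$ vanishing in the numerator when $k$ passes the index where $dj+r\equiv0\pmod n$. Terms with $k\ge m+1$ all contain such a numerator zero from $(q^r;q^d)_k$ (since $q^{r+dm}=q^n\equiv1$), while the corresponding denominator zero occurs only at $dk\equiv0$, i.e. $k\ge n$, which is outside the range. So each term is $\equiv0\pmod{\Phi_n(q)}$, and the pairing upgrades this to $\Phi_n(q)^2$. Doing the same at every primitive $n'$-th root with $n'\mid n$ (replacing $n$ by $n'$ in the index bookkeeping) gives divisibility by $[n]\Phi_n(q)$.

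The obstacle is to verify that the reflection $k\mapsto k'$ maps $\{m+1,\dots,n-1\}$ to itself. It must also preserve the $q$-power weights $b^kq^{(d-2r)k}$ modulo $\Phi_{n'}(q)$; this is the subtle part because $b$ is a free parameter. To deal with it, we would first prove the statement for $b=q^{n}$-independent reasons, namely by reducing the $b$-dependence via the factor $(q^r/b;q^d)_k/(bq^d;q^d)_k$, which is reflection-symmetric up to $b^{-k}$. Finally, we combine all four congruences.
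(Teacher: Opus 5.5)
The paper does not prove this lemma; it quotes it from Xu and Wang. So your sketch has to stand on its own, and it has a genuine gap.

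The reductions modulo $1-aq^n$, $a-q^n$ and $b-q^n$ are fine. For $k\ge m+1$ each $c_k$ contains $(1-aq^n)(1-q^n/a)(1-q^n/b)$ in its numerator. The denominators $1-aq^{dj}$, $1-q^{dj}/a$, $1-bq^{dj}$ with $1\le j\le n-1$ are coprime to these, because $dj\ne n$.

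\textbf{The gap is the modulus $[n]$.} You handle the factors $\Phi_{n'}(q)$, with $1<n'<n$ and $n'\mid n$, by ``doing the same with $n$ replaced by $n'$''. The argument you gave for $n$ does not survive that substitution.

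\emph{Terms are not individually divisible.} $(q^d;q^d)_k$ contains $\Phi_{n'}(q)$ as soon as $k\ge n'$, which happens inside $\{m+1,\dots,n-1\}$. For instance, with $d=2$, $r=1$, $n=9$, the terms $k=6,7$ have $\Phi_3(q)$-adic valuation $0$.

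\emph{A global reflection does not cancel them either.} Take $d=2$, $r=1$, $n=15$ (so $m=7$) and $\zeta$ a primitive cube root of unity. The values at $\zeta$ of the terms $k=9$ and $k=13=n+m-9$ satisfy $c_{13}(\zeta)=\frac78\,c_9(\zeta)\neq0$.

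\emph{What is actually needed is a block decomposition.}
\begin{enumerate}
\item Write $m=l_0n'+m_1$ with $0\le m_1<n'$.
\item Check that every $c_k$ is $\Phi_{n'}$-integral.
\item Show $c_{ln'+j}(\zeta)=R_l\,c_j(\zeta)$, where $R_l$ is the finite value at $\zeta$ of the summand at $k=ln'$ with its factor $[2dk+r]$ removed.
\item Observe $c_j(\zeta)=0$ for $m_1<j<n'$, since $n'\mid r+dm_1$.
\item Prove $\sum_{j=0}^{m_1}c_j(\zeta)=0$ by the pairing $j\leftrightarrow m_1-j$.
\end{enumerate}
The tail then splits into two kinds of pieces. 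The block $l=l_0$, $m_1<j<n'$, vanishes termwise. Each full block $l_0<l<n/n'$ equals $R_l\sum_{j=0}^{n'-1}c_j(\zeta)=0$.

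\textbf{The $\Phi_n(q)^2$ step is also not carried out.} You write the reflection as $dk'+r\equiv-(dk+r)\pmod n$, which means $k+k'\equiv 2m$. In general this neither preserves the range ($m+1\mapsto m-1$) nor flips the sign of $[2dk+r]$. The correct reflection is $k'=n+m-k$, i.e.\ $2dk'+r\equiv-(2dk+r)\pmod n$. It visibly maps $\{m+1,\dots,n-1\}$ onto itself, so your stated ``obstacle'' disappears.

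Since every $c_k$ in the range vanishes at a primitive $n$-th root $\zeta$, your claimed ``invariance modulo $\Phi_n(q)$'' is vacuous. The statement actually needed is
\[
\tilde c_{n+m-k}(\zeta)=-\tilde c_k(\zeta),\qquad \tilde c_k=c_k/(1-q^n).
\]
This does hold. The factor $b^k(q^r/b;q^d)_k/(bq^d;q^d)_k$ changes under the reflection only by a $b$-free factor $\pm\zeta^{e}$, so your concern about the free parameter $b$ is unfounded. All powers of $\zeta$ then cancel, leaving exactly $-1$. But that computation is the whole content of this step, and your proposal does not contain it.
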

\begin{lem}
    Let $d,n$ be positive integers and $r$ be an integer with $dn-n\geqslant r\geqslant d-n$, such that $\gcd(n,d)=1$, 
    and $n\equiv -r \pmod{d}$. Then, modulo $[n]\Phi_n(q)(1-aq^{dn-n})(a-q^{dn-n})(b-q^{dn-n})$,
    \begin{align*}
        \sum_{k=((d-1)n-r+d)/d}^{n-1}[2dk+r]\frac{(aq^r,q^r/a,q^r/b,q^r;q^d)_k}{(aq^d,q^d/a,bq^d,q^d;q^d)_k}b^kq^{(d-2r)k}\equiv0.
    \end{align*}
\end{lem}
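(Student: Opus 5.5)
We prove this lemma by the creative microscoping method of Guo and Zudilin, following the proof of its companion Lemma~2.1 in \cite{XW}. Let $c_q(k)$ denote the $k$-th summand and put $N=(d-1)n$, so that $N\equiv -r\cdot(-1)\cdot\ldots$; more precisely, $n\equiv -r\pmod d$ gives $N\equiv r\pmod d$, and $m_0:=(N-r)/d$ is a nonnegative integer. The sum runs over $m_0+1\leqslant k\leqslant n-1$. The modulus $[n]\Phi_n(q)(1-aq^{N})(a-q^{N})(b-q^{N})$ is a product of pairwise coprime factors in $\mathbb{Z}[q,a,b]$. It therefore suffices to prove the congruence separately modulo $(1-aq^N)$, $(a-q^N)$, $(b-q^N)$, and modulo $[n]\Phi_n(q)$. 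Before doing so, we check that no denominator vanishes in the ranges used. The factor $(aq^d;q^d)_k$ at $a=q^{-N}$ contains $1-q^{dj-N}$, and $dj=N$ would force $d\mid n$; this is excluded by $\gcd(n,d)=1$. Also $(q^d;q^d)_k$ with $k\leqslant n-1$ is coprime to $\Phi_n(q)$.

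For the three linear factors, we specialize the parameter. At $a=q^{-N}$, the product $(aq^r;q^d)_k=(q^{r-N};q^d)_k$ contains the factor $1-q^{r-N+dm_0}=0$ for every $k\geqslant m_0+1$. Hence every summand vanishes, giving the congruence modulo $1-aq^N$. The case $a=q^N$ is the same, using $(q^r/a;q^d)_k$, and $b=q^N$ is the same, using $(q^r/b;q^d)_k$. By the assumption $d-n\leqslant r\leqslant dn-n$, the index $m_0$ lies in $[0,n-1]$, so these vanishing factors really occur inside the summation range.

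The substantive part is the congruence modulo $[n]\Phi_n(q)=\Phi_n(q)^2\prod_{m\mid n,\,1<m<n}\Phi_m(q)$. We handle it in two steps.
\begin{enumerate}
\item We show that each $c_q(k)$ with $m_0<k\leqslant n-1$ is divisible by $\Phi_n(q)$. Indeed, $(q^r;q^d)_k$ contains $1-q^{r+dm_0}=1-q^{N}$, which is divisible by $\Phi_n(q)$. The denominators carry no $\Phi_n$, as checked above.
\item We prove that $\sum_k c_q(k)\equiv 0$ modulo $\Phi_n(q)$ once more, and modulo each $\Phi_m(q)$ with $m\mid n$. For this we use the reflection argument of Guo and Schlosser. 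Let $\zeta$ be a primitive $m$-th root of unity. For $k$ in the range, define $k'$ by $dk'+r\equiv -(dk+r)\pmod m$ inside the same block of length $m$. One then verifies that $c_\zeta(k')=-c_\zeta(k)$, up to the extra $\Phi_n$ factor when $m=n$. This uses $\zeta^{dm}=1$ and the identity $(x;q^d)_{j}\big/(y;q^d)_j$ evaluated under $q^{d}\mapsto\zeta^d$, which turns the shifted factorials into ratios symmetric under the reflection.
\end{enumerate}
Summing the paired terms over blocks of length $m$ inside $[m_0+1,n-1]$, and showing that the leftover terms vanish, gives the required divisibility.

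We expect step~2 to be the main obstacle: the precise definition of the involution $k\mapsto k'$, and the check that the interval $[m_0+1,n-1]$ decomposes into complete reflection-invariant blocks, with the incomplete block at the start contributing zero because of the $(q^r;q^d)_k$ factor. We would first try to copy verbatim the block decomposition from Lemma~2.1 of \cite{XW}, replacing $n$ by $N=(d-1)n$ in the parameter specializations while keeping $n$ as the cyclotomic index. We would then check that $N\equiv r\pmod d$ is exactly the hypothesis that makes the pairing consistent.
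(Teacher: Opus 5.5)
The paper does not prove this lemma; it quotes it from Xu and Wang. So I am measuring your outline against the standard creative-microscoping argument.

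\textbf{What works.} Your treatment of the three linear factors and your Step~1 are fine. For a divisor $1<m<n$ of $n$, the block picture is right in outline.

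\textbf{The gap.} The heart of the lemma is the second factor $\Phi_n(q)$ in $[n]\Phi_n(q)$, and your Step~2 does not produce it. Every term of the tail already vanishes at a primitive $n$-th root of unity $\zeta$, so an identity $c_\zeta(k')=-c_\zeta(k)$ only says $0=0$. No block of length $n$ fits inside $[m_0+1,n-1]$, so for $m=n$ the whole tail is your ``incomplete block''. Your argument that it vanishes because of $(q^r;q^d)_k$ only returns the first power of $\Phi_n(q)$, which you already have from Step~1.

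The involution you define is also wrong. The condition $dk'+r\equiv-(dk+r)\pmod m$ means $k+k'\equiv 2m_0\pmod m$. The factor $[2dk+r]$ and the $q$-shifted factorials reflect correctly only when $d(k+k')+r\equiv 0$, i.e.\ $k+k'\equiv m_0\pmod m$. For $m<n$, inside a block this is $t\mapsto s_m-t$ with $ds_m+r\equiv0\pmod m$. It must be combined with the quasi-periodicity $c_\zeta(lm+t)=\lambda_l\,c_\zeta(t)$, which needs a limit because $(q^d;q^d)_{lm}$ vanishes at $\zeta$. For $m=n$ your map does not even preserve the tail: for instance $k=m_0+1\mapsto m_0-1$ when $m_0\geqslant 1$. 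As written, the proposal proves the congruence only modulo $[n](1-aq^N)(a-q^N)(b-q^N)$.

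\textbf{The missing step.} You need the termwise antisymmetry
\[
c_q(m_0+n-k)\equiv -c_q(k)\pmod{\Phi_n(q)^2},\qquad m_0+1\leqslant k\leqslant n-1,
\]
for the involution $k\mapsto k'=m_0+n-k$. This map preserves the tail and satisfies $d(k+k')+r=(2d-1)n$. To prove it:
\begin{enumerate}
\item Write $(q^r;q^d)_k=(1-q^N)(q^r;q^d)_{m_0}(q^{N+d};q^d)_{k-m_0-1}$. Remove the factor $1-q^N$, which carries $\Phi_n(q)$ exactly once, and set $q=\zeta$.
\item Reflect $(\zeta^d;\zeta^d)_{n-1-k}$ and $(\zeta^d;\zeta^d)_{m_0+n-k}$ using $(\zeta^d;\zeta^d)_{n-1}=n$.
\item Reflect the three ratios $(xq^r;q^d)_k/(q^d/x;q^d)_k$, $x\in\{a,a^{-1},b^{-1}\}$, about $M=m_0+n$ using $\zeta^{dM+r}=1$.
\item Use that the value of $[2dk'+r]$ at $q=\zeta$ is $-\zeta^{-(2dk+r)}$ times the value of $[2dk+r]$ there.
\item Check that the powers of $a$ cancel between $x=a$ and $x=a^{-1}$, and that those of $b$ cancel against $b^{k'-k}$.
\item Check that all remaining signs and powers of $\zeta$ multiply to exactly $-1$; for even $n$ this uses $\zeta^{n/2}=-1$.
\end{enumerate}
Summing over the tail, with a possible fixed point $k=k'$ then being itself $\equiv 0$, gives the second factor of $\Phi_n(q)$.
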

In what follows, we give some statements that are fundamental to the proof of Theorem~\ref{thm:main-1},
one of which was proved by Guo~\cite[Lemma~2.1]{Guo-qE2F2}.
\begin{lem}\label{lem:n-coprime}
Let $n$ and $d$ be positive integers, and let $r$ be an integer with $r<n$ and $n\equiv r\pmod{d}$. Write
\[\frac{{(q^r;q^d)}_{(n-r)/d}}{{(q^d;q^d)}_{(n-r)/d}}=\frac{A(q)}{B(q)},\]
where $A(q)$ and $B(q)$ are relatively prime polynomials in $q$. Then $B(q)$ is relatively prime to $1-q^n$.
\end{lem}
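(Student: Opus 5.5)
The plan is to compare multiplicities of cyclotomic factors. Since $1-q^n=\prod_{e\mid n}\Phi_e(q)$, it suffices to show that for every divisor $e>1$ of $n$, the power of $\Phi_e(q)$ dividing the denominator $(q^d;q^d)_m$, with $m=(n-r)/d$, is at most the power dividing the numerator $(q^r;q^d)_m$. Then no factor $\Phi_e(q)$ with $e\mid n$ survives in $B(q)$ after reduction to lowest terms. Two preliminary remarks dispose of degenerate cases.
\begin{itemize}
\item If some factor $1-q^{r+d(j-1)}$ with $1\leqslant j\leqslant m$ vanishes, then the quotient is $0$. In that case $A(q)=0$ and $B(q)=1$, and the claim is trivial.
\item Otherwise, factors with a negative exponent satisfy $1-q^{-a}=-q^{-a}(1-q^{a})$. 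Hence $\Phi_e(q)$ divides $1-q^{c}$ if and only if $e\mid c$, whatever the sign of $c\neq0$. This holds because each $1-q^{c}$ is, up to a unit power of $q$, a product of cyclotomic polynomials.
\end{itemize}

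Fix $e\mid n$ and put $g=\gcd(e,d)$ and $e'=e/g$. The multiplicity of $\Phi_e(q)$ in $(q^d;q^d)_m=\prod_{j=1}^m(1-q^{dj})$ is the number of $j\in[1,m]$ with $e\mid dj$. Since $\gcd(e',d/g)=1$, this condition is equivalent to $e'\mid j$, so the count is exactly $\lfloor m/e'\rfloor$.

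For the numerator we count $j\in[1,m]$ with $e\mid r+d(j-1)$. Here the hypothesis $n\equiv r\pmod d$ is used: $g$ divides both $d$ and $n$, so $g\mid r$. The condition therefore becomes $e'\mid r/g+(d/g)(j-1)$. Because $d/g$ is invertible modulo $e'$, this defines a single residue class $j\equiv j_0\pmod{e'}$. Any residue class modulo $e'$ meets $[1,m]$ in at least $\lfloor m/e'\rfloor$ integers. So the numerator multiplicity is at least the denominator multiplicity, every $\Phi_e(q)$ with $e\mid n$ cancels completely, and $\gcd(B(q),1-q^n)=1$.

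The only delicate point is the step $g\mid r$, which reduces the divisibility condition to a single residue class. Without it, the numerator might contain no factors $\Phi_e(q)$ at all. The congruence $n\equiv r\pmod d$ is used exactly there. The bookkeeping for $r\leqslant0$ is handled by the two remarks above.
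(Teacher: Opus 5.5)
Your proof is correct, but it takes a different route from the natural one. The paper does not prove this lemma; it quotes Guo. The quick argument reverses the order of the numerator factors, exactly as the paper does in the proof of Lemma~\ref{lem:F-n}:
\[
\frac{(q^r;q^d)_m}{(q^d;q^d)_m}=\prod_{k=1}^{m}\frac{1-q^{n-dk}}{1-q^{dk}},\qquad m=\frac{n-r}{d}.
\]
For every $t\mid n$ one has $t\mid n-dk$ if and only if $t\mid dk$. So each $\Phi_t(q)$ with $t\mid n$ divides the $k$-th numerator factor exactly when it divides the $k$-th denominator factor, and the lemma follows at once (the zero case and negative exponents are handled as in your two remarks).

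What this pairing buys:
\begin{itemize}
\item It absorbs your step $g\mid r$ and the residue-class count.
\item It gives equality of multiplicities rather than an inequality. Hence, when the quotient is nonzero, $A(q)$ is also prime to $1-q^n$.
\end{itemize}
In fact your class $j\equiv j_0\pmod{e'}$ is exactly $j\equiv m+1\pmod{e'}$, which is the pairing in disguise and shows your bound is sharp.

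What your approach buys: it works directly with the arithmetic progression, like the paper's proof of Lemma~\ref{lem:G-div-n}. That method carries over to quotients with no such complementary structure, whereas the pairing is specific to this quotient.

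Two small slips:
\begin{itemize}
\item Your initial reduction should include $e=1$, since $\Phi_1(q)$ divides $1-q^n$. Your count covers this case verbatim, with $g=e'=1$ and both multiplicities equal to $m$.
\item Powers of $q$ are not units in $\mathbb{Q}[q]$. They are, however, coprime to $1-q^n$, which is all you need.
\end{itemize}
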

We now turn to another statement, which generalizes a result of Guo~\cite[Lemma~2.3]{Guo-qE2F2}.
\begin{lem}\label{lem:F-n}
    Let $n$ and $d$ be positive integers satisfy $\gcd(n,d)=1$ . Let $r$ be an integer with $n+d-nd\leqslant r \leqslant n$ and $n\equiv r\pmod{d}$. Then modulo $[n]\Phi_n(q)^4$,
    \begin{align}\label{eq:F-n}
        &q^{(n-r)(d-n-3r)/2d}[2n-r]\frac{(q^r;q^d)_{2(n-r)/d}}{(q^d;q^d)^2_{(n-r)/d}}\nonumber\\
        &\hspace{2em}\equiv(-1)^{(n-r)/d} q^{r(r-n)/d}\left\{[n]-([n]^3+[n]^4(1-q))\sum_{k=1}^{(n-r)/d}\frac{q^{kd}}{[kd]^2}\right\}.
    \end{align}
\end{lem}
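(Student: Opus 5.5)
The plan is to factor the left-hand side of \eqref{eq:F-n} into a product over $j=1,\dots,m$, where $m=(n-r)/d$, and then expand each factor to second order in $1-q^n$. Since the result is multiplied by $[n]$, it only has to be controlled modulo $\Phi_n(q)^4$.

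\textbf{Step 1: factorization.} Split $(q^r;q^d)_{2m}=(q^r;q^d)_m\,(q^{n};q^d)_m$, using $r+dm=n$. Reversing the order of the first block gives
\[(q^r;q^d)_m=\prod_{j=1}^m(1-q^{n-dj}).\]
The second block is $(1-q^n)\prod_{j=1}^{m-1}(1-q^{n+dj})$. Since $2n-r=n+dm$, we have $(1-q)[2n-r]=1-q^{n+dm}$, which supplies the missing factor $j=m$. Hence
\[
[2n-r]\frac{(q^r;q^d)_{2m}}{(q^d;q^d)_m^2}=[n]\prod_{j=1}^m\frac{(1-q^{n-dj})(1-q^{n+dj})}{(1-q^{dj})^2}.
\]

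\textbf{Step 2: expansion of each factor.} Put $u=1-q^n=(1-q)[n]$. A direct expansion gives the exact identity
\[
(1-q^{n-dj})(1-q^{n+dj})=-q^{-dj}(1-q^{dj})^2\Big(q^n-\frac{u^2q^{dj}}{(1-q^{dj})^2}\Big),
\]
because the linear term in $u$ combines with $(1-q^{-dj})(1-q^{dj})$ to give the factor $(1-u)=q^n$. Therefore each factor of the product in Step 1 equals
\[-q^{n-dj}\Big(1-q^{-n}\frac{[n]^2q^{dj}}{[dj]^2}\Big).\]

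We need that no $[dj]$ shares a factor with $\Phi_n(q)$. Since $1\le j\le m$ and $dm=n-r\le dn-d$, we have $j\le n-1$; together with $\gcd(n,d)=1$ this gives $n\nmid dj$, so $\Phi_n(q)\nmid[dj]$. The denominators are therefore harmless modulo powers of $\Phi_n(q)$.

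\textbf{Step 3: collecting terms.} Multiplying out over $j$ and discarding terms of order $[n]^4$ (these become $[n]^5$ after the prefactor $[n]$, hence vanish modulo $[n]\Phi_n(q)^4$) gives
\[
(-1)^m q^{nm-dm(m+1)/2}\,[n]\Big(1-q^{-n}[n]^2\sum_{j=1}^m\frac{q^{dj}}{[dj]^2}\Big).
\]
Next, $q^{-n}=1/(1-(1-q)[n])\equiv 1+(1-q)[n]\pmod{\Phi_n(q)^2}$. After multiplication by $[n]^3$, this congruence holds modulo $[n]\Phi_n(q)^4$ and produces exactly the bracket $[n]-([n]^3+[n]^4(1-q))\sum_k q^{kd}/[kd]^2$.

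\textbf{Step 4: the power of $q$.} It remains to check
\[
\frac{(n-r)(d-n-3r)}{2d}+nm-\frac{dm(m+1)}{2}=\frac{r(r-n)}{d}.
\]
Substituting $m=(n-r)/d$, the left side becomes
\[
\frac{(n-r)}{2d}\big(d-n-3r+2n-(n-r)-d\big)=\frac{(n-r)(-2r)}{2d}=\frac{r(r-n)}{d},
\]
as required.

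I expect the only delicate point to be the exact rewriting in Step 2, which makes the first-order term in $u$ disappear. The remaining difficulty is the bookkeeping of moduli: the congruence is used modulo $\Phi_n(q)^4$ with the prefactor $[n]$, and modulo $\Phi_n(q)^2$ for the expansion of $q^{-n}$.
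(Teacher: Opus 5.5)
Your proposal is correct and is essentially the paper's proof. It uses the same factorization $[n]\prod_{j=1}^{m}(1-q^{n-dj})(1-q^{n+dj})/(1-q^{dj})^2$, the same identity $(1-q^{n-dj})(1-q^{n+dj})=(1-q^n)^2-q^{n-dj}(1-q^{dj})^2$ (the paper folds your expansion of $q^{-n}$ into this identity modulo $\Phi_n(q)^4$), and the same second-order truncation.

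The one check you should add concerns the proper divisors $t\mid n$ with $1<t<n$; the paper covers this via Lemma~\ref{lem:n-coprime}. Since $t\mid n\pm dj\iff t\mid dj\iff t\mid j$, each factor of your product has $\Phi_t$-valuation $0$. Every discarded term $[n]^{2s+1}/\prod_{\ell=1}^{s}[dj_\ell]^2$ ($s\ge 2$), and the remainder $[n]^5/[dj]^2$ from $q^{-n}$, still has positive $\Phi_t$-valuation. So your reduction "only $\Phi_n(q)^4$ matters" is legitimate.
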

\begin{proof}
    It is readily seen that the left-hand side of \eqref{eq:F-n} can be written as
    \[q^{(n-r)(d-n-3r)/2d}[n]\frac{(q^r;q^d)_{(n-r)/d}(q^{n+d};q^d)_{(n-r)/d}}{(q^d;q^d)_{(n-r)/d}^2}.\]
    Note that $(q^{n+d};q^d)_{(n-r)/d}\equiv (q^d;q^d)_{(n-r)/d}\pmod{[n]}$. By Lemma~\ref{lem:n-coprime}, to prove \eqref{eq:F-n}, it is enough to 
    show that 
    \begin{align}\label{eq:F-mid}
        &\frac{(q^r;q^d)_{(n-r)/d}(q^{n+d};q^d)_{(n-r)/d}}{(q^d;q^d)_{(n-r)/d}^2}\nonumber\\
        &\hspace{0.5em}\equiv (-1)^{(n-r)/d}q^{(n-r)(n-d+r)/2d}\left\{1-([n]^2+[n]^3(1-q))\sum_{k=1}^{(n-r)/d}\frac{q^{kd}}{[kd]^2}\right\}\pmod{\Phi_n(q)^4}.
    \end{align}
    We have the equation
    \[(1-q^{n-kd})(1-q^{n+kd})+(1-q^{kd})^2q^{n-kd}-q^n(1-q^n)^2-q^n(1-q^n)^3=(1-q^n)^4,\]
    it follows that 
    \begin{align*}
        (1-q^{n-kd})(1-q^{n+kd})\equiv-(1-q^{kd})^2q^{n-kd}+q^n(1-q^n)^2+q^n(1-q^n)^3\pmod{\Phi_n(q)^4}.
    \end{align*}
    Moreover, for $k=1,2,\dots,\frac{n-r}{d}$, the polynomial $1-q^{kd}$ is relatively prime to $\Phi_n(q)$. Then
    \begin{align*}
        &\frac{(q^r;q^d)_{(n-r)/d}(q^{n+d};q^d)_{(n-r)/d}}{(q^d;q^d)_{(n-r)/d}^2}\\
        &\hspace{0.5em}=\prod_{k=1}^{(n-r)/d}\frac{(1-q^{n-kd})(1-q^{n+kd})}{(1-q^{kd})^2}\\
        &\hspace{0.5em}\equiv(-1)^{(n-r)/d}q^{(n-r)(n-d+r)/2d}\left\{1-([n]^2+[n]^3(1-q))\sum_{k=1}^{(n-r)/d}\frac{q^{kd}}{[kd]^2}\right\}\pmod{\Phi_n(q)^4}.
    \end{align*}      
    This prove \eqref{eq:F-mid}.
\end{proof}

\begin{lem}\label{lem:G-div-n}
    Let $n$ and $d$ be positive integers satisfying $\gcd(n,d)=1$. Let $r$ be an integer with $n+d-nd\leqslant r \leqslant n$ and 
    $n\equiv r \pmod{d} $. For $k\in \{1,2,\dots,\frac{n-r}{d}\}$, write 
    \[\frac{(1-q^n)(q^d;q^d)_{k-1}(q^{2r};q^d)_{k-1}}{(q^r;q^d)_{k}(q^r;q^d)_{k-1}}=\frac{A_k(q)}{B_k(q)},\]
    where $A_k(q)$ and $B_k(q)$ are relatively prime polynomials in $q$. Then $B_k(q)$ is relatively prime to $1-q^n$.
\end{lem}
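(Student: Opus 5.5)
The plan is to work one cyclotomic factor at a time. Since $1-q^n=\prod_{m\mid n,\,m>1}\Phi_m(q)$ up to the factor $\Phi_1(q)=q-1$, it suffices to show the following: for every divisor $m>1$ of $n$, the multiplicity of $\Phi_m(q)$ in the denominator $(q^r;q^d)_k(q^r;q^d)_{k-1}$ is at most its multiplicity in the numerator $(1-q^n)(q^d;q^d)_{k-1}(q^{2r};q^d)_{k-1}$. Then no such $\Phi_m$ survives in $B_k(q)$.

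There are two technical points to handle first.
\begin{itemize}
\item Since $r$ may be $\leqslant 0$, some factors have the form $1-q^{-a}=-q^{-a}(1-q^a)$ with $a>0$. These have the same cyclotomic content as $1-q^{a}$.
\item No factor $1-q^{r+dj}$ is $0$. For $d\geqslant2$ we have $\gcd(r,d)=\gcd(n,d)=1$, and for $d=1$ the hypothesis forces $r\geqslant1$.
\end{itemize}
The multiplicity of $\Phi_m$ in $1-q^{e}$ ($e\neq0$) is $1$ if $m\mid e$ and $0$ otherwise. So everything reduces to counting exponents divisible by $m$.

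Set $N=(n-r)/d$, so that $1\leqslant k\leqslant N$, and use $m\mid n$ together with $\gcd(m,d)=1$.
\begin{itemize}
\item Since $n-(r+dj)=d(N-j)$, we get $m\mid r+dj$ if and only if $m\mid N-j$. Hence the denominator count equals the number of multiples of $m$ in the interval $[N-k+1,N]$ plus the number in $[N-k+2,N]$. Both are intervals of positive integers.
\item Similarly, $2n-(2r+dj)=d(2N-j)$. So $(q^{2r};q^d)_{k-1}$ contributes the number of multiples of $m$ in $[2N-k+2,2N]$.
\item The factor $(q^d;q^d)_{k-1}$ contributes the number in $[1,k-1]$, namely $\lfloor (k-1)/m\rfloor$.
\item The factor $1-q^n$ contributes exactly $1$.
\end{itemize}
Writing counts as differences of floors, the claim becomes the inequality
\[
2\Big\lfloor\tfrac{N}{m}\Big\rfloor-\Big\lfloor\tfrac{N-k}{m}\Big\rfloor-\Big\lfloor\tfrac{N-k+1}{m}\Big\rfloor
\leqslant 1+\Big\lfloor\tfrac{k-1}{m}\Big\rfloor+\Big\lfloor\tfrac{2N}{m}\Big\rfloor-\Big\lfloor\tfrac{2N-k+1}{m}\Big\rfloor .
\]

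The main obstacle is proving this floor inequality; the crude bounds are off by one. Each block of $k$ or $k-1$ consecutive integers contains at most $\lfloor (k-1)/m\rfloor+1$ multiples of $m$. That bound alone leaves the denominator possibly one factor ahead, so the two denominator intervals must be coupled with the interval $[2N-k+2,2N]$.

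I would write $N=am+s$ and $k-1=bm+t$ with $0\leqslant s,t<m$. The left side is then $2b$, $2b+1$ or $2b+2$, according to how $t$ compares with $s$ (the cases $t<s$, $t=s$, $t>s$, with wrap-around). Similarly, the right side is computed from the residue $2s \bmod m$ compared with $t$. One then checks case by case that whenever the left side takes its maximal value $2b+2$, i.e.\ both denominator windows wrap past a multiple of $m$, the residue conditions force the window $[2N-k+2,2N]$ to contain $b+1$ multiples of $m$ as well. This mirrors the argument of Guo~\cite[Lemma~2.3]{Guo-qE2F2}, which I would follow closely and adapt to the shifted range of $r$.
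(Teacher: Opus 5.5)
Your reduction to counting multiples of $m$ in windows is correct: for $m\mid n$, $m\mid r+dj$ iff $m\mid N-j$, and $m\mid 2r+dj$ iff $m\mid 2N-j$. There are two small slips. You cannot discard $m=1$, since $\Phi_1(q)=q-1$ divides $1-q^n$; this is harmless, because there are $2k-1$ factors above and below. Also, for $d=2$, $r\leqslant 0$ a factor $1-q^{2r+dj}$ can vanish, which only makes the claim trivial.

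The real problem is the step you defer to a case check: the floor inequality is false. Write $N=am+s$ and $k-1=bm+t$. The left side is $2b$, $2b+1$, $2b+2$ according as $s>t$, $s=t$, $s<t$. The right side is $2b+2$ if $(2s\bmod m)<t$ and $2b+1$ otherwise. So the inequality fails exactly when $s<t\leqslant 2s<m$. That requires a proper divisor $m\geqslant 3$ of $n$ with $m\leqslant N$, so the lemma does hold for prime $n$. But it does happen.

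Take $d=2$, $r=1$, $n=9$, $k=3$, $m=3$, so $N=4$. The windows $[2,4]$ and $[3,4]$ each contain $3$, while $[7,8]$ and $[1,2]$ contain no multiple of $3$. You would need $2\leqslant 1$. Explicitly,
\[
\frac{(1-q^9)(q^2;q^2)_2^2}{(q;q^2)_3\,(q;q^2)_2}
=\frac{(1-q^9)(1-q^2)^2(1-q^4)^2}{(1-q)^2(1-q^3)^2(1-q^5)}
=\frac{\Phi_2(q)^4\,\Phi_4(q)^2\,\Phi_9(q)}{\Phi_3(q)\,\Phi_5(q)},
\]
so $B_3(q)=\Phi_3(q)\Phi_5(q)$ shares the factor $\Phi_3(q)$ with $1-q^9$. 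The lemma as stated is therefore false, already for $d=2$, $r=1$. No adaptation of Guo's argument can finish your proof.

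The paper's own proof has the same hole. For $t\nmid r$ it records only upper bounds on $h_t,g_t,l_t$, and $f_t+g_t-h_t-l_t\geqslant -1$ does not follow from them. In the example above, $f_3=g_3=0$ and $h_3=l_3=1$.

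What your crude bounds do prove is the weaker statement with $(1-q^n)^2$ in place of $1-q^n$: the denominator count is at most $2b+2$, and the remaining numerator count is at least $2b$.

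In the proof of Theorem~\ref{thm:main-1}, the lemma is only used to see that both sides of \eqref{eq:G-final} vanish modulo $[n]$. That follows from the same window count, thanks to the factor $[n]^4$. Let $c_1,c_3$ be the numbers of multiples of $m$ in $[N-k+1,N]$ and $[2N-k+2,2N]$, and let $\epsilon=1$ or $0$ according as $m\mid N-k+1$ or not. Then both sides of \eqref{eq:G-final} have $\Phi_m$-multiplicity $4+b+c_3-2c_1-\epsilon\geqslant 1$ for every divisor $m>1$ of $n$, since $c_1\leqslant b+1$ and $c_3\geqslant b$. So the theorem is not endangered, but this lemma has to be replaced, not proved.
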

\begin{proof}
    Clearly,
    \begin{align*}
        q^m-1=
        \begin{cases}
            \prod_{t\mid m}^{}\Phi_t(q),     & \text{if}\hspace{0.5em} m>0,\\[4pt]
            -q^m\prod_{t\mid m}^{}\Phi_t(q), & \text{if}\hspace{0.5em} m<0.
        \end{cases}
    \end{align*}
    Therefore, we can write
    \begin{align*}
        (q^d;q^d)_{k-1}&=\prod_{t=1}^{\infty}\Phi_t(q)^{f_t},\\
        (q^{2r};q^d)_{k-1}&=\pm q^u\prod_{t=1}^{\infty}\Phi_t(q)^{g_t},\\
        (q^{r};q^d)_{k}&=\pm q^v\prod_{t=1}^{\infty}\Phi_t(q)^{h_t},\\
        (q^{r};q^d)_{k-1}&=\pm q^w\prod_{t=1}^{\infty}\Phi_t(q)^{l_t},
    \end{align*}
where $u,v,w$ are integers, and $f_t, g_t,h_t$ and $l_t$ denote the numbers of multiples of $t$
in the sets $\{d,2d, \dots ,(k-1)d \}$, $\{2r, 2r+d,\dots ,2r+(k-2)d\}$, $\{r,r+d,\dots, r+(k-1)d\}$ 
and $\{r,r+d,\dots, r+(k-2)d\}$, respectively.
\par We now suppose that $t\mid n$. Since gcd$(n,d)$=1, we have gcd$(d,t)$=1. It follows that 
one and only one integer in the $S_k=\{r+ktd,r+(kt+1)d,\dots, r+(kt+t-1)d\}$ is divisible by $t$ for $k\in \N$.
First consider the case of $t\mid r$, we have $f_t=\lfloor \frac{k-1}{t}\rfloor$ and $h_t=\lfloor \frac{k-1}{t}\rfloor+1$, where $\lfloor x\rfloor$ stands for 
the greatest integer not exceeding $x$. Meanwhile, we also have $g_t=l_t=\lfloor \frac{k-2}{t}\rfloor+1$. Therefore, 
$f_t+g_t-h_t-l_t= -1$. Now assume that $t\nmid r$, we have $f_t=\lfloor \frac{k-1}{t}\rfloor$ , $h_t\leqslant
\lfloor \frac{k-1}{t}\rfloor+1$, and 
$g_t,l_t\leqslant \lfloor \frac{k-2}{t}\rfloor+1$. Thus, $f_t+g_t-h_t-l_t\geqslant -1$.\par
In view of above argument, we finish the proof.
\end{proof}

\begin{lem}\label{lem:q-integers-n}
     Let $n$ and $d$ be positive integers satisfying $\gcd(n,d)=1$. Let $r$ be an integer with $n+d-nd\leqslant r \leqslant n$ and 
    $n\equiv r \pmod{d} $. For $k\in \{1,2,\dots,\frac{n-r}{d}\}$, write 
    \[\frac{1-q^{n}}{1-q^{dk}}=\frac{C_k(q)}{D_k(q)},\] where $C_k(q)$ and $D_k(q)$ are relatively prime polynomials in $q$. 
    Then $D_k(q)$ is relatively prime to $1-q^n$.
\end{lem}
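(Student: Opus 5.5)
The plan is to factor both $1-q^n$ and $1-q^{dk}$ into cyclotomic polynomials and compare multiplicities, in the same way as in the proof of Lemma~\ref{lem:G-div-n}. We have
\[
1-q^n=-\prod_{t\mid n}\Phi_t(q),\qquad 1-q^{dk}=-\prod_{s\mid dk}\Phi_s(q).
\]
Since distinct cyclotomic polynomials are coprime irreducible polynomials over $\mathbb{Q}$, the reduced fraction $C_k(q)/D_k(q)$ is obtained by cancelling common factors. This gives
\[
D_k(q)=\prod_{\substack{s\mid dk\\ s\nmid n}}\Phi_s(q),
\]
up to sign. The irreducible factors of $1-q^n$ are exactly the $\Phi_t(q)$ with $t\mid n$, and the factors of $1-q^{dk}$ that are not cancelled are precisely those $\Phi_s$ with $s\nmid n$. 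It therefore suffices to observe that no $\Phi_s$ with $s\nmid n$ divides $1-q^n$, and that each $\Phi_s$ appears in $1-q^{dk}$ with multiplicity one. The second point holds because $1-q^m$ is squarefree.

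Concretely, each $\Phi_t(q)$ with $t\mid n$ occurs in the numerator with multiplicity exactly $1$, and in the denominator with multiplicity $1$ or $0$ according as $t\mid dk$ or not. After cancellation, the multiplicity of such $\Phi_t$ in $D_k(q)$ is $\max(0,[t\mid dk]-1)=0$. So no $\Phi_t$ with $t\mid n$ divides $D_k(q)$. Hence $\gcd(D_k(q),1-q^n)=1$.

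Nothing here uses the hypotheses $\gcd(n,d)=1$ or $k\le (n-r)/d$, except that $dk\geq 1$ so that $1-q^{dk}$ is a nonzero polynomial. I do not expect a genuine obstacle. The only point needing care is to state clearly that cyclotomic polynomials are pairwise coprime irreducibles, so that the reduced denominator is determined by the multiplicity count.
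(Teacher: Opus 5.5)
Your proof is correct and is the paper's argument: the paper's one-line proof just notes that $q^{dk}-1$ is a product of distinct cyclotomic polynomials. Your multiplicity count (each $\Phi_t$ with $t\mid n$ occurs once in $1-q^n$ and at most once in $1-q^{dk}$, so it cancels) supplies the details, and you are right that $\gcd(n,d)=1$ and $k\le (n-r)/d$ are not needed.
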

\begin{proof}
    We immediately obtain the desired result by using the fact that $q^{dk}-1$ can be written as a product of different 
    cyclotomic polynomials. 
\end{proof}

\section{Proof of Theorem~\ref{thm:main-1}}
For fixed integers $d$ and $r$, we consider the following rational functions in $q^m$ and $q^k$:
\begin{align*}
    &F(m,k)=(-1)^kq^{dk(k-2m-1)/2+(d-2r)m}\frac{[2dm+r](q^r;q^d)^3_m(q^r;q^d)_{m+k}}{(q^d;q^d)^3_m(q^d;q^d)_{m-k}(q^r;q^d)^2_k},\\
    &G(m,k)=(-1)^{k-1}q^{dk(k-2m+1)/2+(d-2r)(m-1)}\frac{(q^r;q^d)^3_m(q^r;q^d)_{m+k-1}}{(1-q)^2(q^d;q^d)^3_{m-1}(q^d;q^d)_{m-k}(q^r;q^d)^2_k},
\end{align*}
where we assume that $1/(q^d;q^d)_m=0$ for any negative integer $m$. 
It is straightforward to verify that
\begin{align}\label{eq:qz}
    [dk-d+r]F(m,k-1)-[dk-d+2r]F(m,k)=G(m+1,k)-G(m,k).
\end{align}
Firstly, summing over $m$ from 0 to $(n-r)/d$, we obtain
\begin{align*}
    \displaystyle
    \sum_{m=0}^{(n-r)/d}[dk-d+r]F(m,k-1)-\sum_{m=0}^{(n-r)/d}[dk-d+2r]F(m,k)=G\left(\frac{n+d-r}{d},k\right),
\end{align*}
where we have used $G(0,k)=0$. Then, summing over $k$ from $1$ to $(n-r)/d$ and performing an iteration, observing that $F(m,(n-r)/d)=0$ for $m<(n-r)/d$,  we get
\begin{align}\label{eq:FG-n}
    \sum_{m=0}^{(n-r)/d}F(m,0)=&F\left(\frac{n-r}{d},\frac{n-r}{d}\right)\prod_{j=1}^{(n-r)/d}\frac{[dj-d+2r]}{[dj-d+r]}\nonumber\\
    &+\sum_{k=1}^{(n-r)/d}G\left(\frac{n+d-r}{d},k\right)\prod_{j=1}^{k-1}\frac{[dj-d+2r]}{[dj-d+r]}\frac{1}{[dk-d+r]}.
\end{align}
By Lemma~\ref{lem:F-n}, we conclude that, modulo $[n]\Phi_n(q)^4$,
\begin{align}\label{eq:F-final}
    &F\left(\frac{n-r}{d},\frac{n-r}{d}\right)\prod_{j=1}^{(n-r)/d}\frac{[dj-d+2r]}{[dj-d+r]}\nonumber\\
    &\hspace{1em}=(-1)^{(n-r)/d}q^{(n-r)(d-n-3r)/2d}[2n-r]\frac{(q^r;q^d)_{2(n-r)/d}}{(q^d;q^d)^2_{(n-r)/d}}\cdot
    \frac{(q^{2r};q^d)_{(n-r)/d}}{(q^d;q^d)_{(n-r)/d}}\nonumber\\
    &\hspace{1em}\equiv q^{r(r-n)/d}\frac{(q^{2r};q^d)_{(n-r)/d}}{(q^d;q^d)_{(n-r)/d}}\left\{[n]-([n]^3+[n]^4(1-q))
    \sum_{k=1}^{(n-r)/d}\frac{q^{kd}}{[kd]^2}\right\}.
\end{align}
For $k=1,2,\dots,(n-r)/d$, we see that 
\begin{align}\label{eq:G}
    &G\left(\frac{n+d-r}{d},k\right)\nonumber\\
    &\hspace{0.5em}=(-1)^{k-1}q^{d\binom{k}{2}-(n-r)(k-(d-2r)/d)}\frac{(q^r;q^d)^3_{(n-r)/d+1}(q^r;q^d)_{(n-r)/d+k}}
    {(1-q)^2(q^d;q^d)^3_{(n-r)/d}(q^d;q^d)_{(n-r)/d-k+1}(q^r;q^d)_k^2}\nonumber\\
    &\hspace{0.5em}=(-1)^{k-1}q^{d\binom{k}{2}-(n-r)(k-(d-2r)/d)}\frac{(1-q^n)^4(q^r;q^d)^4_{(n-r)/d}(q^{n+d};q^d)_{k-1}}
    {(1-q)^2(q^d;q^d)^3_{(n-r)/d}(q^d;q^d)_{(n-r)/d-k+1}(q^r;q^d)^2_k}.
\end{align}
Since $q^n\equiv 1\pmod{\Phi_n(q)}$, it follows that 
\begin{align*}
    (q^d;q^d)_{(n-r)/d-k+1}&=\frac{(q^d;q^d)_{(n-r)/d}}{(q^{n-r-dk-2d};q^d)_{k-1}}\nonumber\\
    &\equiv \frac{(q^d;q^d)_{(n-r)/d}}{(q^{-r-dk-2d};q^d)_{k-1}}\nonumber\\
    &=(-1)^{k-1}q^{(k-1)(dk+2r-2d)/2}\frac{(q^d;q^d)_{(n-r)/d}}{(q^{r};q^d)_{k-1}}\pmod{\Phi_n(q)}
\end{align*}
and 
\begin{align*}
    \frac{(q^r;q^d)_{(n-r)/d}}{(q^{d};q^d)_{(n-r)/d}}&=\prod_{j=1}^{(n-r)/d}\frac{1-q^{dj+r-d}}{1-q^{n-dj-r+d}}\nonumber\\
    &\equiv \prod_{j=1}^{(n-r)/d}\frac{1-q^{dj+r-d}}{1-q^{d-r-dj}}=(-1)^{(n-r)/d}q^{(n-r)(n+r-d)/2d}\pmod{\Phi_n(q)}.
\end{align*}
Applying the above two $q$-congruences, we deduce from \eqref{eq:G} that, for $1\leqslant k \leqslant (n-r)/d$,
\begin{align*}
    G\left(\frac{n+d-r}{d},k\right)\equiv q^{(kd-d-2n+2r)(d-n)/d}\frac{[n]^4(q^d;q^d)_{k}}{[dk][dk-d+r](q^r;q^d)_{k}}\pmod{\Phi_n(q)^5}.
\end{align*}
Furthermore, we obtain
\begin{align}\label{eq:G-final}
    &G\left(\frac{n+d-r}{d},k\right)\prod_{j=1}^{k-1}\frac{[dj-d+2r]}{[dj-d+r]}\cdot\frac{1}{[dk-d+r]}\nonumber\\
    &\hspace{1em}\equiv q^{(kd-d-2n+2r)(d-n)/d}\frac{[n]^4(q^d;q^d)_{k}(q^{2r};q^d)_{k-1}}{[dk][dk-d+r]^2 (q^r;q^d)_k(q^r;q^d)_{k-1}}\pmod{\Phi_n(q)^5}.
\end{align}
In terms of Lemma~\ref{lem:G-div-n} and Lemma~\ref{lem:q-integers-n}, we conclude that the right-hand side of \eqref{eq:G-final} is congruent to 0 modulo $[n]$.
Via similar arguments as the ones in the proof of Lemma~\ref{lem:G-div-n} and Lemma~\ref{lem:q-integers-n}, it is easy to see $G(\frac{n+d-r}{d},k)\equiv0 \pmod{[n]}$
from \eqref{eq:G}, and so is the left-hand side of \eqref{eq:G-final}. Since the least common mutiple of $[n]$ and $\Phi_n(q)^5$ is $[n]\Phi_n(q)^4$, the $q$-congruence
\eqref{eq:G-final} holds modulo $[n]\Phi_n(q)^4$. \par
Combining \eqref{eq:FG-n}, \eqref{eq:F-final} and the case modulo $[n]\Phi_n(q)^4$ of \eqref{eq:G-final}, and using the identity
\begin{align}\label{eq:iden}
    \frac{(q^d;q^d)_{k}(q^{2r};q^d)_{k-1}}{[dk][dk-d+r]^2 (q^r;q^d)_k(q^r;q^d)_{k-1}}=
    \frac{(q^d;q^d)_{k-1}(q^{2r};q^d)_{k-1}}{[dk-d+r]^3(q^r;q^d)^2_{k-1}},
\end{align}
we complete the proof.

\section{Proof of Theorem~\ref{thm:main-2}}
We first intrduce a result analogous to Lemma~\ref{lem:n-coprime} proposed by Guo\cite[Lemma~4.1]{Guo-qE2F2}.
\begin{lem}
    Let $n$ and $d$ be positive integers. Let $r$ be an integer with $\gcd(r,d)=1$, $r<(d-1)n$, and $d|(n+r)$. write 
    \[\frac{(q^r;q^d)_{((d-1)n-r)/d}}{(q^d;q^d)_{((d-1)n-r)/d}}=\frac{A(q)}{B(q)},\]
    where $A(q)$ and $B(q)$ are relatively prime polynomials in $q$. Then $B(q)$ is relatively prime to $1-q^n$.
\end{lem}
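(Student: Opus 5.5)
The plan is to mimic Lemma~\ref{lem:n-coprime}, i.e.\ Guo's cyclotomic-factor counting. Write $N=((d-1)n-r)/d$, which is a nonnegative integer because $d\mid(n+r)$. Every factor $1-q^m$ with $m\neq0$ equals, up to a sign and a power of $q$, $\prod_{t\mid |m|}\Phi_t(q)$. Hence
\[
\frac{(q^r;q^d)_N}{(q^d;q^d)_N}=\pm q^{u}\prod_{t\geqslant1}\Phi_t(q)^{a_t-b_t},
\]
where $a_t$ counts the multiples of $t$ in $\{r,r+d,\dots,r+(N-1)d\}$ and $b_t$ counts the multiples of $t$ in $\{d,2d,\dots,Nd\}$. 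We must first check that no factor of the numerator vanishes identically. This holds because $\gcd(r,d)=1$: if $r+jd=0$ then $d\mid r$, which forces $d=1$ and is a degenerate case that can be handled or excluded separately. Since $1-q^n=\prod_{t\mid n}\Phi_t(q)$, the claim reduces to showing $a_t\geqslant b_t$ for every divisor $t$ of $n$.

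Fix $t\mid n$. The key first step is to show $\gcd(t,d)=1$. If a prime $p$ divides both $t$ and $d$, then $p\mid n$ and $p\mid d\mid n+r$, so $p\mid r$. This contradicts $\gcd(r,d)=1$. Consequently $d$ is invertible modulo $t$, and every block of $t$ consecutive terms of either arithmetic progression contains exactly one multiple of $t$. Writing $N=st+e$ with $0\leqslant e<t$, we get $b_t=s$ exactly, because $t\mid jd$ if and only if $t\mid j$. For the numerator, the first $st$ terms already contribute $s$ multiples of $t$, so $a_t\geqslant s=b_t$. In fact this rough count suffices without analysing the remaining $e$ terms, since $b_t$ is already exactly $\lfloor N/t\rfloor$.

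The only delicate point, and the step I expect to need care, is the sign and range bookkeeping when $r$ is negative. In that case some factors $1-q^{r+jd}$ have negative exponents. They are still, up to a unit $\pm q^{m}$, products of $\Phi_t(q)$ over $t\mid |r+jd|$, and "$t$ divides $r+jd$" is unaffected by sign. So the counting argument goes through verbatim, provided that no factor is zero. I would also note that $r<(d-1)n$ guarantees $N\geqslant1$ (or $N=0$, which is trivial). After cancelling common cyclotomic factors, $B(q)$ then contains no $\Phi_t(q)$ with $t\mid n$, so it is coprime to $1-q^n$.
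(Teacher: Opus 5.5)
Your proof is correct and is essentially the standard argument. The paper does not reprove this lemma (it only cites Guo's Lemma~4.1), but your cyclotomic block count for $t\mid n$ is exactly the technique the paper uses to prove Lemma~\ref{lem:G-div-n}. That count rests on $\gcd(t,d)=1$, which you rightly derive from $\gcd(r,d)=1$ and $d\mid n+r$. Your two loose ends are harmless: no factor ever vanishes, since $d=1$ forces $N=-r$ and the progression is $r,r+1,\dots,-1$, and any power of $q$ that lands in $B(q)$ is coprime to $1-q^n$.
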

In terms of the same technique as in the proof of Lemma~\ref{lem:F-n} and the relation
\[[2(d-1)n-r]\frac{(q^r;q^d)_{2((d-1)n-r)/d}}{(q^r;q^d)^2_{((d-1)n-r)/d}}=[(d-1)n]\frac{(q^r;q^d)_{((d-1)n-r)/d}(q^{(d-1)n+d};q^d)_{((d-1)n-r)/d}}
{(q^d;q^d)^2_{((d-1)n-r)/d}},\]
we can prove the following result.
\begin{lem}\label{lem:F-dn}
    Let $n$ and $d\geqslant2$ be positive integers satisfy $\gcd(n,d)=1$. Let $r$ be an integer with $d-n\leqslant r\leqslant (d-1)n$, and $n\equiv -r\pmod{d}$.
    Then modulo $[n]\Phi_n(q)^4$,
    \begin{align*}\label{eq:F-dn}
        &q^{((d-1)n-r)(d-(d-1)n-3r)/2d}[2(d-1)n-r]\frac{(q^r;q^d)_{2((d-1)n-r)/d}}{(q^d;q^d)^2_{((d-1)n-r)/d}}\nonumber\\
        &\hspace{2em}\equiv(-1)^{((d-1)n-r)/d} q^{r(n+r-dn)/d}\nonumber\\
        &\hspace{3em}\times\left\{[(d-1)n]-([(d-1)n]^3+[(d-1)n]^4(1-q))\sum_{k=1}^{((d-1)n-r)/d}\frac{q^{kd}}{[kd]^2}\right\}.
    \end{align*}
\end{lem}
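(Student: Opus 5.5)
We mirror the proof of Lemma~\ref{lem:F-n}, with $N=(d-1)n$ in place of $n$ and $m=((d-1)n-r)/d$ playing the role of $(n-r)/d$. Note that $N\equiv -n\equiv r\pmod d$, so $m$ is a nonnegative integer.

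\emph{Step 1: rewrite the left-hand side.} By the displayed relation preceding the lemma, the left-hand side equals
\[q^{m(d-N-3r)/2}[N]\frac{(q^r;q^d)_m(q^{N+d};q^d)_m}{(q^d;q^d)_m^2}.\]
Since $[N]=[n]\cdot\frac{1-q^{(d-1)n}}{1-q^n}$ is divisible by $[n]$, and the denominator of $(q^r;q^d)_m/(q^d;q^d)_m$ is coprime to $1-q^n$ by the preceding lemma (Guo's Lemma~4.1), it suffices to prove a congruence modulo $\Phi_n(q)^4$ for the quotient. We also need $(q^{N+d};q^d)_m\equiv(q^d;q^d)_m\pmod{[n]}$, which holds because $q^N\equiv1$ modulo $1-q^n$. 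The factor $[N]$ is common to both sides, so the claim reduces to
\[\prod_{k=1}^{m}\frac{(1-q^{N-kd})(1-q^{N+kd})}{(1-q^{kd})^2}\equiv(-1)^m q^{m(N-d+r)/2}\Big\{1-([N]^2+[N]^3(1-q))\sum_{k=1}^m\frac{q^{kd}}{[kd]^2}\Big\}\pmod{\Phi_n(q)^4}.\]
Here we use $(q^r;q^d)_m=\prod_{k=1}^m(1-q^{N-kd})$, which is just a reindexing since $r+(m-1)d=N-d$. The exponent of $q$ should match the power appearing in Lemma~\ref{lem:F-n} with $n$ replaced by $N$.

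\emph{Step 2: the polynomial identity.} As in Lemma~\ref{lem:F-n}, apply
\[(1-q^{N-kd})(1-q^{N+kd})=-(1-q^{kd})^2q^{N-kd}+q^N(1-q^N)^2+q^N(1-q^N)^3+(1-q^N)^4,\]
and note that $(1-q^N)^4\equiv0\pmod{\Phi_n(q)^4}$. Dividing by $(1-q^{kd})^2$ gives each factor as
\[-q^{N-kd}\Big(1-\frac{q^{kd}(1-q^N)^2}{(1-q^{kd})^2}(1+(1-q^N))\Big).\]
This division is legitimate because $\gcd(kd,n)$ is not $n$ for $1\le k\le m$: we have $n\nmid k$ since $k\le m<n$, and $\gcd(n,d)=1$. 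So $1-q^{kd}$ is coprime to $\Phi_n(q)$.

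\emph{Step 3: expand the product.} Take the product over $k$. Every correction term is divisible by $\Phi_n(q)^2$, so products of two corrections vanish modulo $\Phi_n(q)^4$. What remains is
\[(-1)^m q^{\sum(N-kd)}\Big(1-\sum_k\frac{q^{kd}(1-q^N)^2(2-q^N)}{(1-q^{kd})^2}\Big).\]
Rewriting $(1-q^N)^2(2-q^N)/(1-q^{kd})^2=([N]^2+[N]^3(1-q))/[kd]^2$ gives exactly the brace, since $\sum_{k=1}^m(N-kd)=m(N-d+r)/2$.

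Finally, multiply back by $q^{m(d-N-3r)/2}[N]$. The exponents combine as $m(N-d+r)/2+m(d-N-3r)/2=-mr=r(n+r-dn)/d$, which is the claimed power, so the congruence holds modulo $[N]\Phi_n(q)^4$ and hence modulo $[n]\Phi_n(q)^4$.

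The only delicate point is the reduction in Step 1. The congruence for the quotient holds only modulo $\Phi_n(q)^4$, while the target modulus is $[n]\Phi_n(q)^4$. The factor $[N]$, which is divisible by $[n]$, supplies the extra divisibility, and the coprimality lemma ensures that no cyclotomic factor of $1-q^n$ is lost in the denominators when passing between the two.
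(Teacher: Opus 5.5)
Your proposal is correct and follows the paper's route: you rewrite the left-hand side via the relation displayed before the lemma, then rerun the proof of Lemma~\ref{lem:F-n} with $N=(d-1)n$ in place of $n$, using the four-term identity and the first-order expansion of the product modulo $\Phi_n(q)^4$. (That relation's left denominator should read $(q^d;q^d)^2_{((d-1)n-r)/d}$, as you implicitly use.) The one step you leave implicit, exactly as the paper does, is the lift from $\Phi_n(q)^4$ to the extra factor $[n]$ (or $[N]$, as you claim), and it follows by counting cyclotomic factors: since $\gcd(N,d)=1$, each $\Phi_t(q)$ with $t\mid N$ occurs equally often in $(q^{N+d};q^d)_m$ and $(q^d;q^d)_m$, no less often in $(q^r;q^d)_m$ than in $(q^d;q^d)_m$, and at most once in $[kd]$, so both sides vanish modulo $[N]$.
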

Applying the same approach as that used for the last two results in Section 2, we derive the following two analogous results.

\begin{lem}\label{lem:G-div-dn}
    Let $n$ and $d\geqslant2$ be positive integers satisfy $\gcd(n,d)=1$. Let $r$ be an integer with $d-n\leqslant r\leqslant (d-1)n$, and $n\equiv -r\pmod{d}$.
    For $k\in \{1,2,\dots, \frac{(d-1)n-r}{d}\}$, write \[\frac{(1-q^{(d-1)n})(q^d;q^d)_{k-1}(q^{2r};q^d)_{k-1}}{(q^r;q^d)_{k}(q^r;q^d)_{k-1}}
    =\frac{A_k(q)}{B_k(q)},\] where $A_k(q)$ and $B_k(q)$ are relatively prime polynomials in $q$. Then $B_k(q)$ is relatively prime to $1-q^n$.
\end{lem}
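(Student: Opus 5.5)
We follow the proof of Lemma~\ref{lem:G-div-n}: factor everything into cyclotomic polynomials and, for each divisor $t$ of $n$, compare the exponents of $\Phi_t(q)$ in the numerator and the denominator. Since $1-q^n=-\prod_{t\mid n}\Phi_t(q)$, it suffices to show the following. For every $t\mid n$ with $t>1$, the total exponent of $\Phi_t(q)$ in
\[
\frac{(1-q^{(d-1)n})(q^d;q^d)_{k-1}(q^{2r};q^d)_{k-1}}{(q^r;q^d)_{k}(q^r;q^d)_{k-1}}
\]
is nonnegative. Then no factor of $1-q^n$ survives in the reduced denominator $B_k(q)$.

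First we write
\[
(q^d;q^d)_{k-1}=\prod_t\Phi_t(q)^{f_t},\qquad (q^{2r};q^d)_{k-1}=\pm q^u\prod_t\Phi_t(q)^{g_t},
\]
\[
(q^r;q^d)_k=\pm q^v\prod_t\Phi_t(q)^{h_t},\qquad (q^r;q^d)_{k-1}=\pm q^w\prod_t\Phi_t(q)^{l_t}.
\]
This uses the factorization of $q^m-1$ for positive and negative $m$ recorded in the proof of Lemma~\ref{lem:G-div-n}. Here $f_t,g_t,h_t,l_t$ count the multiples of $t$ in the index sets
\[
\{d,\dots,(k-1)d\},\quad \{2r,\dots,2r+(k-2)d\},\quad \{r,\dots,r+(k-1)d\},\quad \{r,\dots,r+(k-2)d\}.
\]
One point needs care: none of these sets may contain $0$, since a factor $1-q^0$ would vanish. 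For the three $r$-based sets this holds because $\gcd(r,d)=1$ (which follows from $n\equiv -r\pmod d$ and $\gcd(n,d)=1$) forces $r\not\equiv 0\pmod d$ when $d\ge 2$. The same argument applies to $2r$ when $d$ is odd. If $d$ is even, $2r\equiv 0\pmod d$ can occur, so this case has to be checked by hand, possibly using the ranges $k\le((d-1)n-r)/d$ and $r\ge d-n$.

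Next, fix $t\mid n$ with $t>1$. Because $\gcd(t,d)=1$, every block of $t$ consecutive terms of an arithmetic progression with difference $d$ contains exactly one multiple of $t$. This gives $f_t=\lfloor (k-1)/t\rfloor$, together with $h_t\le \lfloor (k-1)/t\rfloor+1$ and $g_t,l_t\le\lfloor (k-2)/t\rfloor+1$. If $t\mid r$, then equality holds throughout and $g_t=l_t$, so $f_t+g_t-h_t-l_t=-1$. If $t\nmid r$, it does not immediately follow that this difference is at least $-1$: the count $g_t$ might be $0$ while $l_t=1$. So we argue more precisely with residues. Since $t$ is odd or coprime to $2$ modulo the relevant progression, multiplication by $2$ is invertible mod $t$ when $t$ is odd, and we compare the positions of the first multiples of $t$ in the two progressions. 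If needed, we fall back on the cruder bounds $h_t+l_t\le f_t+g_t+2$ and use the extra factor described below.

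The factor $1-q^{(d-1)n}$ contributes $\Phi_t(q)^1$ for every $t\mid n$, since $t\mid n$ implies $t\mid (d-1)n$. So in the case $t\mid r$ the total exponent is $f_t+g_t-h_t-l_t+1=0\ge 0$. In the case $t\nmid r$ we need $f_t+g_t-h_t-l_t\ge -1$, which follows from the first-multiple analysis above, mirroring the last paragraph of the proof of Lemma~\ref{lem:G-div-n}.

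The main obstacle is the case $t\nmid r$. We must show that $h_t+l_t$ exceeds $f_t+g_t$ by at most one. The crude bound allows an excess of $2$, so we have to exploit the relation between multiples of $t$ in $r+jd$ and in $2r+jd$. The fallback is to observe that if $r+j_0d\equiv 0\pmod t$, then the progression $2r+jd$ reaches the residue $0$ at $j\equiv 2j_0\pmod t$. We then bound the resulting counts directly, using $k-1\le ((d-1)n-r)/d-1$.
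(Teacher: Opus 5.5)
You have found the right weak point: when $t\nmid r$, upper bounds on $h_t,g_t,l_t$ give no lower bound on $f_t+g_t-h_t-l_t$. The paper's proof of Lemma~\ref{lem:G-div-n}, which it invokes for this lemma, makes exactly that unjustified inference. But the repair you defer to cannot succeed. The inequality $f_t+g_t-h_t-l_t\ge -1$ is false in the stated range, and so is the lemma.

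Carry out your residue computation exactly. Write $k-1=at+b$ with $0\le b<t$. Let $j_0,c\in\{0,\dots,t-1\}$ be defined by $t\mid r+j_0d$ and $c\equiv 2j_0\pmod t$, so that $t\mid 2r+jd$ iff $j\equiv c\pmod t$. Let $\chi(P)\in\{0,1\}$ be the truth value of $P$. Then $f_t=a$, $h_t=a+\chi(j_0\le b)$, $l_t=a+\chi(j_0<b)$ and $g_t=a+\chi(c<b)$, hence
\[
f_t+g_t-h_t-l_t=\chi(c<b)-\chi(j_0\le b)-\chi(j_0<b),
\]
which equals $-2$ whenever $j_0<b\le c$. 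Put $N=((d-1)n-r)/d$. The range of $k$ only tells you $j_0\equiv N\pmod t$ (because $r+Nd=(d-1)n$) and $k-1\le N-1$, and this does not exclude $j_0<b\le c$.

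Concretely, take $d=2$, $r=1$, $n=9$, $k=3$. Every hypothesis holds: $\gcd(9,2)=1$, $-7\le 1\le 9$, $9\equiv -1\pmod 2$, $k\le 4$. Then
\[
\frac{(1-q^{9})(q^2;q^2)_2^2}{(q;q^2)_3(q;q^2)_2}
=\frac{(1-q^9)(1-q^2)^2(1-q^4)^2}{(1-q)^2(1-q^3)^2(1-q^5)}
=\frac{\Phi_2(q)^4\Phi_4(q)^2\Phi_9(q)}{\Phi_3(q)\Phi_5(q)},
\]
so $B_3(q)=\Phi_3(q)\Phi_5(q)$ shares the factor $\Phi_3(q)$ with $1-q^9$. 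Here $t=3$, $j_0=1$, $b=c=2$; since $d=2$, this also refutes Lemma~\ref{lem:G-div-n}. For $d\ge 3$, take $d=3$, $n=10$, $r=2$, $k=3$, which fails at $t=5$: $\Phi_5(q)$ occurs once in $1-q^{20}$, twice in $(q^2;q^3)_3(q^2;q^3)_2=(1-q^2)^2(1-q^5)^2(1-q^8)$, and not at all in $(q^3;q^3)_2(q^4;q^3)_2$.

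So no sharper counting will prove the statement as written; the statement itself must change. The displayed quantity is always at least $-2$, so the lemma becomes true with $(1-q^{(d-1)n})^2$ in place of $1-q^{(d-1)n}$.

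What the proof of Theorem~\ref{thm:main-2} actually needs survives by a direct count. On the right of \eqref{eq:G-dn-final}, the factor multiplying $[(d-1)n]^4$ has $\Phi_t$-exponent
\[
\chi(c<b)-\chi(j_0\le b)-\chi(j_0<b)-2\chi(j_0=b)\ge -3
\]
for every $t\mid n$, $t>1$, because the value $-2$ of the first three terms forces $j_0\ne b$. So that side is still $\equiv 0\pmod{[n]}$.

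Two smaller points in your write-up. First, $t=1$ must be included, since $\Phi_1(q)\mid 1-q^n$; it is harmless, giving exponent $0$ exactly as in your case $t\mid r$. Second, if $2r+jd=0$ for some $j$, the fraction is identically $0$, so $B_k=1$ and nothing needs to be checked by hand there.
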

    
\begin{lem}\label{lem:q-integers-dn}
    Let $n$ and $d\geqslant2$ be positive integers satisfy $\gcd(n,d)=1$. Let $r$ be an integer with $d-n\leqslant r\leqslant (d-1)n$, and $n\equiv -r\pmod{d}$.
    For $k\in \{1,2,\dots, \frac{(d-1)n-r}{d}\}$, \[\frac{1-q^{(d-1)n}}{1-q^{dk}}=\frac{C_k(q)}{D_k(q)},\] where $C_k(q)$ and $D_k(q)$ are relatively prime polynomials in $q$. 
    Then $D_k(q)$ is relatively prime to $1-q^n$.
\end{lem}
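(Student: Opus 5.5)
The statement is Lemma~\ref{lem:q-integers-dn}. I would prove it the way Lemma~\ref{lem:q-integers-n} was proved, by factoring both $q$-integers into cyclotomic polynomials.

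Write
\[
1-q^{(d-1)n}=-\prod_{t\mid (d-1)n}\Phi_t(q), \qquad 1-q^{dk}=-\prod_{t\mid dk}\Phi_t(q).
\]
Each factorization is a product of \emph{distinct} cyclotomic polynomials, each to the first power. Hence, after cancellation,
\[
\frac{1-q^{(d-1)n}}{1-q^{dk}}=\frac{C_k(q)}{D_k(q)}, \qquad D_k(q)=\prod_{t\mid dk,\ t\nmid (d-1)n}\Phi_t(q),
\]
up to sign.

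Since $1-q^n=-\prod_{t\mid n}\Phi_t(q)$ and distinct cyclotomic polynomials are coprime, it suffices to show the following: no $t$ with $t\mid n$ occurs in $D_k(q)$. Suppose $t\mid n$. Then $t\mid (d-1)n$ automatically. So every such $\Phi_t(q)$ dividing $1-q^{dk}$ is cancelled by the corresponding factor of the numerator, and $\Phi_t(q)\nmid D_k(q)$. This gives $\gcd(D_k(q),1-q^n)=1$.

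Note that this argument does not use the range restriction $1\leqslant k\leqslant ((d-1)n-r)/d$ or the hypothesis $\gcd(n,d)=1$. Those hypotheses are needed only for the companion Lemma~\ref{lem:G-div-dn}, not here. The only point needing care is squarefreeness: $1-q^{dk}$ contains each $\Phi_t(q)$ at most once, so a single copy in the numerator suffices to cancel it. There is therefore no real obstacle.
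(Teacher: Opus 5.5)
Your proof is correct and is essentially the paper's argument: the paper handles this lemma by analogy with Lemma~\ref{lem:q-integers-n}, using exactly the fact that $q^{dk}-1$ is a product of distinct cyclotomic polynomials, so each $\Phi_t(q)$ with $t\mid n$ occurring in $1-q^{dk}$ is cancelled by the corresponding factor of $1-q^{(d-1)n}$ because $t\mid (d-1)n$. Your remark that the hypotheses on $r$, the range of $k$, and $\gcd(n,d)=1$ play no role here is also accurate.
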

We are now ready to complete the proof of Theorem~\ref{thm:main-2}.
\begin{proof}[Proof of Theorem~\ref{thm:main-2}]
Let \(F(m,k)\) and \(G(m,k)\) be as defined in Section~3.
Summing \eqref{eq:qz} with respect to \(m\) from \(0\) to \(((d-1)n-r)/d\), we obtain
\begin{align*}
    G&\left(\frac{(d-1)n+d-r}{d},k\right)\\
    &=\sum_{m=0}^{((d-1)n-r)/d}[dk-d+r]F(m,k-1)-\sum_{m=0}^{((d-1)n-r)/d}[dk-d+2r]F(m,k),
\end{align*}
where we have used the fact that $G(0,k)=0$.
Then, summing over $k$ from $1$ to $((d-1)n-r)/d$ and carrying out an iterative computation,
 observing that $F(m,((d-1)n-r)/d)=0$ for $m<((d-1)n-r)/d$, we obtain
\begin{align}\label{eq:FG-dn}
    \sum_{m=0}^{((d-1)n-r)/d}F(m,0)
    &=F\left(\frac{(d-1)n-r}{d},\frac{(d-1)n-r}{d}\right)\prod_{j=1}^{((d-1)n-r)/d}\frac{[dj-d+2r]}{[dj-d+r]}\nonumber\\
    &\hspace{1em}+\sum_{k=1}^{((d-1)n-r)/d}G\left(\frac{(d-1)n+d-r}{d},k\right)\prod_{j=1}^{k-1}\frac{[dj-d+2r]}{[dj-d+r]}\frac{1}{[dk-d+r]}.
\end{align}
By Lemma~\ref{lem:F-dn}, we conclude that, modulo $[n]\Phi_n(q)^4$,
\begin{align}\label{eq:F-dn-final}
    &F\left(\frac{(d-1)n-r}{d},\frac{(d-1)n-r}{d}\right)\prod_{j=1}^{((d-1)n-r)/d}\frac{[dj-d+2r]}{[dj-d+r]}\nonumber\\
    &\hspace{1em}=(-1)^{((d-1)n-r)/d}q^{(dn-n-r)(d-dn+n-3r)/2d}[2(d-1)n-r]\nonumber\\
    &\hspace{2em}\times\frac{(q^r;q^d)_{2((d-1)n-r)/d}}{(q^d;q^d)^2_{((d-1)n-r)/d}}\cdot
    \frac{(q^{2r};q^d)_{((d-1)n-r)/d}}{(q^d;q^d)_{((d-1)n-r)/d}}\nonumber\\
    &\hspace{1em}\equiv q^{r(r-(d-1)n)/d}\frac{(q^{2r};q^d)_{((d-1)n-r)/d}}{(q^d;q^d)_{((d-1)n-r)/d}}\nonumber\\
    &\hspace{2em}\times\left\{[(d-1)n]-([(d-1)n]^3+[(d-1)n]^4(1-q))
    \sum_{k=1}^{((d-1)n-r)/d}\frac{q^{kd}}{[kd]^2}\right\}.
\end{align}
For $k=1,2,\dots,((d-1)n-r)/d$, we see that 
\begin{align}\label{eq:G-dn}
    G\left(\frac{(d-1)n+d-r}{d},k\right)&=(-1)^{k-1}q^{d\binom{k}{2}-((d-1)n-r)(k-(d-2r)/d)}\nonumber\\
    &\hspace{1em}\times\frac{(q^r;q^d)^3_{((d-1)n-r)/d+1}(q^r;q^d)_{((d-1)n-r)/d+k}}
    {(1-q)^2(q^d;q^d)^3_{((d-1)n-r)/d}(q^d;q^d)_{((d-1)n-r)/d-k+1}(q^r;q^d)_k^2}\nonumber\\
    &\hspace{0.5em}=(-1)^{k-1}q^{d\binom{k}{2}-((d-1)n-r)(k-(d-2r)/d)}\nonumber\\
    &\hspace{1em}\times\frac{(1-q^{(d-1)n})^4(q^r;q^d)^4_{((d-1)n-r)/d}(q^{(d-1)n+d};q^d)_{k-1}}
    {(1-q)^2(q^d;q^d)^3_{((d-1)n-r)/d}(q^d;q^d)_{((d-1)n-r)/d-k+1}(q^r;q^d)^2_k}.
\end{align}
Making use of the fact that $q^n\equiv 1 \pmod{\Phi_n(q)}$, it is readily seen that
\begin{align*}
    (q^d;q^d)_{((d-1)n-r)/d-k+1}\equiv (-1)^{k-1}q^{(k-1)(dk+2r-2d)/2}\frac{(q^d;q^d)_{((d-1)n-r)/d}}{(q^r;q^d)_{k-1}}\pmod{\Phi_n(q)}
\end{align*}
and 
\begin{align*}
    \frac{(q^r;q^d)_{((d-1)n-r)/d}}{(q^d;q^d)_{((d-1)n-r)/d}}\equiv (-1)^{((d-1)n-r)/d}q^{(dn-n-r)(dn-n+r-d)/2d}\pmod{\Phi_n(q)}.
\end{align*}
In view of the above two $q$-congruences, it follows from \eqref{eq:G-dn} that, for $1\leqslant k\leqslant((d-1)n-r)/d$,
\begin{align*}
    G&\left(\frac{(d-1)n+d-r}{d},k\right)\\
    &\equiv q^{(kd-d-2dn+2n+2r)(d-dn+n)/d}\frac{[(d-1)n]^4(q^d;q^d)_{k}}{[dk][dk-d+r](q^r;q^d)_{k}}\pmod{\Phi_n(q)^5}.
\end{align*}
Furthermore, we obtain
\begin{align}\label{eq:G-dn-final}
    &G\left(\frac{(d-1)n+d-r}{d},k\right)\prod_{j=1}^{k-1}\frac{[dj-d+2r]}{[dj-d+r]}\cdot\frac{1}{[dk-d+r]}\nonumber\\
    &\hspace{1em}\equiv q^{(kd-d-2dn+2n+2r)(d-dn+n)/d}\frac{[(d-1)n]^4(q^d;q^d)_{k}(q^{2r};q^d)_{k-1}}{[dk][dk-d+r]^2(q^r;q^d)_{k}(q^r;q^d)_{k-1}}\pmod{\Phi_n(q)^5}.
\end{align}
By virtue of Lemma~\ref{lem:G-div-dn} and Lemma~\ref{lem:q-integers-dn}, it is easy to see that the right-hand side 
of \eqref{eq:G-dn-final} is congruent to 0 modulo $[n]$. Similar as before, we can obtain that $G(\tfrac{(d-1)n+d-r}{d},k)\equiv 0\pmod{[n]}$ from 
\eqref{eq:G-dn}, and so is the left-hand side of \eqref{eq:G-dn-final}. Namely, the $q$-congruence \eqref{eq:G-dn-final} holds modulo $[n]$.
It follows from the identity $\operatorname{lcm}([n],\Phi_n(q)^5)=[n]\Phi_n(q)^4$ that the $q$-congruence \eqref{eq:G-dn-final} holds modulo $[n]\Phi_n(q)^4$.\par
Combining \eqref{eq:FG-dn}, \eqref{eq:F-dn-final} and the case modulo $[n]\Phi_n(q)^4$, we immediately complete the proof by using the identity \eqref{eq:iden}.
\end{proof}

\vskip 5mm\noindent{\bf  Declaration of competing interest.} There is no competing interest.

\vskip 5mm \noindent{\bf Data Availability Statements.}
Data sharing not applicable to this article as no datasets were generated or analysed during the current study.

\vskip 5mm \noindent{\bf Acknowledgments.} The second author was partially supported by the National Natural Science Foundation of China (No. 12401435), the Natural Science
 Foundation of Inner Mongolia, China (No. 2024MS01017), the First-Class Disciplines Project, Inner Mongolia Autonomous Region, China (No. YLXKZX-NSD-014) and Program for Innovative Research Team in Universities of Inner Mongolia Autonomous Region (No. NMGIRT2414).

\end{document}